\documentclass[11pt,reqno]{amsart}
\usepackage{amsthm}
\usepackage{amscd}
\usepackage{amsfonts}
\usepackage{amssymb}
\usepackage{amsgen}
\usepackage{amsmath}
\usepackage{amsopn}
\usepackage{verbatim}
\usepackage{xypic}
\usepackage{xspace}
\usepackage{multicol}
\usepackage{url}
\usepackage{upref}

\theoremstyle{plain}
\newtheorem{thm}{Theorem}[section]
\newtheorem{lem}[thm]{Lemma}

\newtheorem{cor}[thm]{Corollary}

\theoremstyle{definition}

\newtheorem{defn}[thm]{Definition}

\theoremstyle{remark}
\newtheorem{rem}[thm]{Remark}

%\numberwithin{equation}{section}

 \font\cyr=wncyr10

\DeclareMathOperator{\sgn}{{sgn}}

 \newcommand{\nc}{\newcommand}
 \nc{\per}[1]{\underset{#1}{\boldsymbol \pi}\,}

 \nc{\MT}{{\rm MT}}
 \nc{\XX}{{X}}
 \nc{\gF}{{\varPhi}}
 \nc{\ot}{\otimes}
 \nc{\bt}{{\bf 2}}

 \nc{\wht}{\widehat}
 \nc{\bwg}{{\bigwedge}}
 \nc{\wg}{{\wedge}}
 \nc{\mal}{{{\scriptstyle \maltese}}}
 \nc{\fA}{{\mathfrak A}}
 \nc{\HH}{{\mathfrak H}}
 \nc{\ra}{\rightarrow}
 \nc{\ors}{{\bfs}}
 \nc{\orr}{{\bfr}}
 \nc{\os}{{\overset}}
 \nc{\G}{{\mathbb G}}
 \nc{\F}{{\mathbb F}}
 \nc{\Z}{{\mathbb Z}}
 \nc{\R}{{\mathbb R}}
 \nc{\N}{{\mathbb N}}
 \nc{\ZN}{{\mathbb Z_{\ge 0}}}
 \nc{\Q}{{\mathbb Q}}
 \nc{\C}{{\mathbb C}}
 \nc{\CP}{{\mathbb{CP}}}
 \nc{\Cnn}{{\mathbb C}_{\ge 0}}
 \nc{\Cp}{{\mathbb C}_{>0}}
 \nc{\MPV}{{\mathcal{MPV}}}
 
 \nc{\tB}{{\tilde B}}

 \nc{\ol}{\overline}

 \nc{\oI}{{\ol{I}}}
 \nc{\bI}{{\bar{I}}}

 \nc{\suf}{{\ast\,}}
 \nc{\sufq}{{\ast_q\,}}
 \nc{\gam}{{\gamma}}
 \nc{\gG}{{\Gamma}}
 \nc{\om}{{\omega}}
 \nc{\vep}{{\varepsilon}}
 \nc{\ga}{{\alpha}}
 \nc{\gl}{{\lambda}}
 \nc{\gb}{{\beta}}
 \nc{\gf}{{\varphi}}
 \nc{\gd}{{\delta}}
 \nc{\orgd}{{\vec \gd\,}}
 \nc{\gs}{{\sigma}}
 \nc{\gth}{{\theta}}
 \nc{\gS}{{\Sigma}}

 \nc{\gk}{{\kappa}}
  \nc{\gz}{{\zeta}}
 \nc{\tgz}{{\tilde{\zeta}}}
 \nc{\gO}{{\Omega}}
 \nc{\sif}{{\mathcal S}}
 \nc{\gt}{{\tau}}
 \nc{\Lra}{\Longrightarrow}
 \nc{\lra}{\longrightarrow}
 \nc{\lmaps}{\longmapsto}
 \nc{\fS}{{\mathfrak S}}
 \nc{\DD}{{\mathfrak D}}
 \nc{\Llra}{\Longleftrightarrow}
 \nc{\ola}{\overleftarrow}
 \nc{\lms}{\longmapsto}
 \nc{\cv}{{{\mathsf c}{\mathsf v}}}
 \nc{\zq}{{\zeta_q}}
 \nc\qup{{q\uparrow 1}}
 \nc{\us}{\underset}
 \nc{\tn}{{\tilde{n}}}
 \nc{\gD}{{\Delta}}
 \nc{\bi}{{\bf i}}
 \nc{\bfone}{{\bf 1}}

 \nc{\bfa}{{\bf a}}
 \nc{\bfb}{{\bf b}}
 \nc{\bfc}{{\bf c}}
 \nc{\bfd}{{\bf d}}
 \nc{\bfe}{{\bf e}}
 \nc{\bff}{{\bf f}}
 \nc{\bfg}{{\bf g}}
 \nc{\bfi}{{\bf i}}
 \nc{\bfj}{{\bf j}}

 \nc{\bfA}{{\bf A}}
 \nc{\bfn}{{\bf n}}
 \nc{\bfl}{{\bf l}}
 \nc{\bfk}{{\bf k}}
 \nc{\bfm}{{\bf m}}
 \nc{\bfo}{{\bf o}}
 \nc{\bfp}{{\bf p}}
 \nc{\bfq}{{\bf q}}
 \nc{\bfr}{{\bf r}}
 \nc{\bfs}{{\bf s}}
 \nc{\bft}{{\bf t}}
 \nc{\bfu}{{\bf u}}
  \nc{\tbfs}{{\tilde{\bfs}}}
 \nc{\tbft}{{\tilde{\bft}}}
 \nc{\tbfu}{{\tilde{\bfu}}}
 \nc{\ttbfs}{{\hat{\bfs}}}
 \nc{\ttbft}{{\hat{\bft}}}
 \nc{\ttbfu}{{\hat{\bfu}}}

 \nc{\mmu}{{\hat{\mu}}}
 \nc{\ggk}{{\hat{\gk}}}
 \nc{\ggs}{{\hat{\gs}}}
 \nc{\bfv}{{\bf v}}
  \nc{\ttbfv}{{\hat{\bfv}}}
 \nc{\bfw}{{\bf w}}
 \nc{\bfx}{{\bf x}}
 \nc{\bfy}{{\bf y}}
 \nc{\bfz}{{\bf z}}
 \nc{\bfB}{{\bf B}}
 \nc{\bfP}{{\bf P}}
 \nc{\bfQ}{{\bf Q}}
 \nc{\bfY}{{\bf Y}}
 \nc{\bfgb}{{\boldsymbol \gb}}
 \nc{\bfgl}{{\boldsymbol \gl}}
 \nc{\wbfgl}{{\widetilde{\bfgl}}}
 \nc{\wwbfgl}{{\overset{\text{\raisebox{-2pt}{$\approx$}}}{\bfgl}}}
 \nc{\bfga}{{\boldsymbol \ga}}
 \nc{\bfrho}{{\boldsymbol \rho}}
 \nc{\bfchi}{{\boldsymbol \chi}}
 \nc{\QX}{{\Q\langle \bfX\rangle}}
 \nc{\QY}{{\Q\langle \bfY\rangle}}
 \nc{\CX}{{\C\langle \bfX\rangle}}
 \nc{\CY}{{\C\langle \bfY\rangle}}
 \nc{\QXX}{{\Q\langle\!\langle \bfX\rangle\!\rangle}}
 \nc{\QYY}{{\Q\langle\!\langle \bfY\rangle\!\rangle}}
 \nc{\CXX}{{\C\langle\!\langle \bfX\rangle\!\rangle}}
 \nc{\CYY}{{\C\langle\!\langle \bfY\rangle\!\rangle}}

 \nc{\bbA}{{\mathbb A}}
 \nc{\bbB}{{\mathbb B}}
 \nc{\bbC}{{\mathbb C}}
 \nc{\bbD}{{\mathbb D}}
 \nc{\bbE}{{\mathbb E}}
 \nc{\bbF}{{\mathbb F}}
 \nc{\bbG}{{\mathbb G}}
 \nc{\bbH}{{\mathbb H}}
 \nc{\bbI}{{\mathbb I}}
 \nc{\bbJ}{{\mathbb J}}
 \nc{\bbK}{{\mathbb K}}
 \nc{\bbL}{{\mathbb L}}
 \nc{\bbM}{{\mathbb M}}
 \nc{\bbN}{{\mathbb N}}
 \nc{\bbO}{{\mathbb O}}
 \nc{\bbP}{{\mathbb P}}
 \nc{\bbQ}{{\mathbb Q}}
 \nc{\bbR}{{\mathbb R}}
 \nc{\bbS}{{\mathbb S}}
 \nc{\bbT}{{\mathbb T}}
 \nc{\bbU}{{\mathbb U}}
 \nc{\bbV}{{\mathbb V}}
 \nc{\bbW}{{\mathbb W}}
 \nc{\bbX}{{\mathbb X}}
 \nc{\bbY}{{\mathbb Y}}
 \nc{\bbZ}{{\mathbb Z}}
 \nc{\bba}{{\mathbb a}}
 \nc{\bbb}{{\mathbb b}}
 \nc{\bbc}{{\mathbb c}}
 \nc{\bbd}{{\mathbb d}}
 \nc{\bbe}{{\mathbb e}}
 \nc{\bbf}{{\mathbb f}}
 \nc{\bbg}{{\mathbb g}}
 \nc{\bbh}{{\mathbb h}}
 \nc{\bbi}{{\mathbb i}}
% \nc{\bbj}{{\mathbb j}}
 \nc{\bbk}{{\mathbb k}}
 \nc{\bbl}{{\mathbb l}}
 \nc{\bbm}{{\mathbb m}}
 \nc{\bbn}{{\mathbb n}}
 \nc{\bbo}{{\mathbb o}}
 \nc{\bbp}{{\mathbb p}}
 \nc{\bbq}{{\mathbb q}}
 \nc{\bbr}{{\mathbb r}}
 \nc{\bbs}{{\mathbb s}}
 \nc{\bbt}{{\mathbb t}}
 \nc{\bbu}{{\mathbb u}}
 \nc{\bbv}{{\mathbb v}}
 \nc{\bbw}{{\mathbb w}}
 \nc{\bbx}{{\mathbb x}}
 \nc{\bby}{{\mathbb y}}
 \nc{\bbz}{{\mathbb z}}

 \nc{\MZV}{{\mathcal{MZV}}}
 \nc{\calA}{{\mathcal A}}
 \nc{\calB}{{\mathcal B}}
 \nc{\calC}{{\mathcal C}}
 \nc{\calD}{{\mathcal D}}
 \nc{\calE}{{\mathcal E}}
 \nc{\calF}{{\mathcal F}}
 \nc{\calG}{{\mathcal G}}
 \nc{\calH}{{\mathcal H}}
 \nc{\calI}{{\mathcal I}}
 \nc{\calJ}{{\mathcal J}}
 \nc{\calK}{{\mathcal K}}
 \nc{\calL}{{\mathcal L}}
 \nc{\calM}{{\mathcal M}}
 \nc{\calN}{{\mathcal N}}
 \nc{\calO}{{\mathcal O}}
 \nc{\calP}{{\mathcal P}}
 \nc{\calQ}{{\mathcal Q}}
 \nc{\calR}{{\mathcal R}}
 \nc{\calS}{{\mathcal S}}
 \nc{\calT}{{\mathcal T}}
 \nc{\calU}{{\mathcal U}}
 \nc{\calV}{{\mathcal V}}
 \nc{\calW}{{\mathcal W}}
 \nc{\calX}{{\mathcal X}}
 \nc{\calY}{{\mathcal Y}}
 \nc{\calZ}{{\mathcal Z}}
  \nc{\cala}{{\mathcal a}}
 \nc{\calb}{{\mathcal b}}
 \nc{\calc}{{\mathcal c}}
 \nc{\cald}{{\mathcal d}}
 \nc{\cale}{{\mathcal e}}
 \nc{\calf}{{\mathcal f}}
 \nc{\calg}{{\mathcal g}}
 \nc{\calh}{{\mathcal h}}
 \nc{\cali}{{\mathcal i}}
 \nc{\calj}{{\mathcal j}}
 \nc{\calk}{{\mathcal k}}
 \nc{\call}{{\mathcal l}}
 \nc{\calm}{{\mathcal m}}
 \nc{\caln}{{\mathcal n}}
 \nc{\calo}{{\mathcal o}}
 \nc{\calp}{{\mathsf p}}
 \nc{\calq}{{\mathcal q}}
 \nc{\calr}{{\mathcal r}}
 \nc{\cals}{{\mathcal s}}
 \nc{\calt}{{\mathcal t}}
 \nc{\calu}{{\mathcal u}}
 \nc{\calv}{{\mathcal v}}
 \nc{\calw}{{\mathcal w}}
 \nc{\calx}{{\mathcal x}}
 \nc{\caly}{{\mathcal y}}
 \nc{\calz}{{\mathcal z}}

 \nc{\frakA}{{\mathfrak A}}
 \nc{\frakB}{{\mathfrak B}}
 \nc{\frakC}{{\mathfrak C}}
 \nc{\frakD}{{\mathfrak D}}
 \nc{\frakE}{{\mathfrak E}}
 \nc{\frakF}{{\mathfrak F}}
 \nc{\frakG}{{\mathfrak G}}
 \nc{\frakH}{{\mathfrak H}}
 \nc{\frakI}{{\mathfrak I}}
 \nc{\frakJ}{{\mathfrak J}}
 \nc{\frakK}{{\mathfrak K}}
 \nc{\frakL}{{\mathfrak L}}
 \nc{\frakM}{{\mathfrak M}}
 \nc{\frakN}{{\mathfrak N}}
 \nc{\frakO}{{\mathfrak O}}
 \nc{\frakP}{{\mathfrak P}}
 \nc{\frakQ}{{\mathfrak Q}}
 \nc{\frakR}{{\mathfrak R}}
 \nc{\frakS}{{\mathfrak S}}
 \nc{\frakT}{{\mathfrak T}}
 \nc{\frakU}{{\mathfrak U}}
 \nc{\frakV}{{\mathfrak V}}
 \nc{\frakW}{{\mathfrak W}}
 \nc{\frakX}{{\mathfrak X}}
 \nc{\frakY}{{\mathfrak Y}}
 \nc{\frakZ}{{\mathfrak Z}}
 \nc{\fraka}{{\mathfrak a}}
 \nc{\frakb}{{\mathfrak b}}
 \nc{\frakc}{{\mathfrak c}}
 \nc{\frakd}{{\mathfrak d}}
 \nc{\frake}{{\mathfrak e}}
 \nc{\frakf}{{\mathfrak f}}
 \nc{\frakg}{{\mathfrak g}}
 \nc{\frakh}{{\mathfrak h}}
 \nc{\fraki}{{\mathfrak i}}
 \nc{\frakj}{{\mathfrak j}}
 \nc{\frakk}{{\mathfrak k}}
 \nc{\frakl}{{\mathfrak l}}
 \nc{\frakm}{{\mathfrak m}}
 \nc{\frakn}{{\mathfrak n}}
 \nc{\frako}{{\mathfrak o}}
 \nc{\frakp}{{\mathfrak p}}
 \nc{\frakq}{{\mathfrak q}}
 \nc{\frakr}{{\mathfrak r}}
 \nc{\fraks}{{\mathfrak s}}
 \nc{\frakt}{{\mathfrak t}}
 \nc{\fraku}{{\mathfrak u}}
 \nc{\frakv}{{\mathfrak v}}
 \nc{\frakw}{{\mathfrak w}}
 \nc{\frakx}{{\mathfrak x}}
 \nc{\fraky}{{\mathfrak y}}
 \nc{\frakz}{{\mathfrak z}}
 \nc{\so}{{\mathfrak so}}
 \nc{\sa}{{\mbox{{\scriptsize \cyr x}}}}
 \nc{\slfour}{{\mathfrak sl}_4}
 \nc{\one}{{\bf 1}}
 \nc{\zero}{{\bf 0}}
 \nc{\Qxy}{\Q\langle x,y\rangle}

 \nc{\DZ}{\mathcal{DZ}}
 \nc{\genf}{\genfrac{[}{]}{0pt}{}}
 \def\sqfr#1#2{{\frac{\genf{#1}{#2}}{\genf{#1+#2}{#2}}}}

\textwidth=16cm
\oddsidemargin=0.5cm
\evensidemargin=0.5cm
\textheight=21cm

\def\HH{\overline{H}}

\nc{\leg}[2]{\left({#1\over #2}\right)}

 \nc{\oll}[1]{\underline{#1}}
 \nc{\hyf}{\text{-}}

 \nc{\bJ}{{\bar{J}}}
 \nc{\bK}{{\bar{K}}}
 \nc{\db}{{\mathbb D}}

 \nc{\hone}{{\widehat{1}}}
 \nc{\wbfp}{{\widetilde{\bfp}}}
 \nc{\wdp}{{\widetilde{p}}}
 \nc{\wwbfp}{{\overset{\text{\raisebox{-2pt}{$\approx$}}}{\bfp}}}
 \nc{\wwdp}{{\overset{\text{\raisebox{-2pt}{$\approx$}}}{\!p}}}
 \nc{\wwB}{{\overset{\text{\raisebox{-2pt}{$\approx$}}}{B}}}
 %\nc{\wwdp}{{\widetilde{\widetilde{p}}}}

 \nc{\wbfw}{{\widetilde{\bfw}}}
 \nc{\wwbfw}{{\overset{\text{\raisebox{-2pt}{$\approx$}}}{\bfw}}}

\nc{\wdl}{{\widetilde{\gl}}}
 \nc{\wwdl}{{\overset{\text{\raisebox{-2pt}{$\approx$}}}{\!\gl}}}

\begin{document}

\title[On $q$-Analogs of MHS and MZSV Identities]
{On $q$-Analogs of Some Families of Multiple Harmonic Sums and Multiple Zeta Star Value Identities}

% author one information
\author{Kh.~Hessami Pilehrood}
\address{The Fields Institute for Research in Mathematical Sciences, 222 College St, Toronto, ON M5T 3J1,  Canada}
\email{hessamik@gmail.com}

\author{T.~Hessami Pilehrood}
\address{The Fields Institute for Research in Mathematical Sciences, 222 College St, Toronto, ON M5T 3J1,  Canada}
\email{hessamit@gmail.com}

\author{Jianqiang Zhao}
\address{Taida Institute of Mathematical Sciences, National Taiwan University, Taipei, Taiwan, 106}
\email{zhaoj@ihes.fr}

\subjclass[2010]{11M32, 11B65.}

\date{}

\keywords{Multiple harmonic sums, multiple zeta values, multiple zeta star values, Euler sums.}

\begin{abstract}
In recent years, there has been  intensive research on the $\Q$-linear relations between
multiple zeta (star) values. In this paper, we prove many families of identities
involving the $q$-analog of these values, from which we can always recover the
corresponding classical identities by taking $q\to 1$.
The main result  of the paper is the duality relations between multiple zeta star values and Euler
sums and their $q$-analogs, which are generalizations of the Two-one formula and some
multiple harmonic sum identities and their $q$-analogs proved by the authors recently.
Such duality relations lead to a proof of the conjecture
by Ihara \emph{et al.}\ that the Hoffman $\star$-elements $\zeta^{\star}(s_1,\dots,s_r)$ with
$s_i\in\{2,3\}$ span  the vector space generated by
multiple zeta values over ${\mathbb Q}$.
\end{abstract}

\maketitle

\section{Introduction}
Multiple harmonic sums (MHS) are nested generalizations of harmonic sums and multiple zeta
values (MZV) are the limits of MHS when the number of terms in the sum goes to infinity.
In recent years, MHS, MZV and their generalizations have been found to be intimately related to
Feynman integrals in perturbative quantum field theory
\cite{Broadhurst1996,BrownSc2012,Vermaseren1999} in physics as well as to
Hopf and Lie algebras, combinatorics (double shuffle relations)
\cite{Hoffman1997,Hoffman2004b,Hoffman2005},
algebraic geometry \cite{Brown2012,Goncharov2001a,GoncharovMa2004},
and even modular forms \cite{GKZ2006} in mathematics.

We now recall their basic setup. In order to unify MHS, MZV and their alternating versions
we first define a sort of double cover of the set $\N_0=\N\cup \{0\}$
where $\N$ is the set of positive integers.
\begin{defn} \label{defn:dbANDoplus}
Let $\db_0:=\N_0 \cup \ol{\N}_0$ and $\db:=\N \cup \ol{\N}$  be the sets of
\emph{signed nonnegative} and \emph{signed positive  numbers}, respectively,
 where
\begin{equation*}
  \ol{\N}_0=\{\bar k: k\in\N_0 \} \quad\text{and}\quad \ol{\N}=\{\bar k: k\in\N \}.
\end{equation*}
In some sense, $\bar k$ is $k$ dressed by a negative sign, but $\bar k$ is not a negative number.
Define for all $k\in\N_0$ the absolute value function $| \cdot |$ on $\db_0$ by $|k|=|\bar k|=k$
and the sign function by $\sgn(k)=1$ and $\sgn(\bar k)=-1$. We make $\db_0$ a semi-group by defining
a commutative and associative binary operation $\oplus$ (called \emph{O-plus}) as follows: for all $a,b\in\db_0$
\begin{equation}\label{equ:oplusDefn}
    a\oplus b=
\left\{
  \begin{array}{ll}
    \ol{|a|+|b|}, & \hbox{if only one of $a$ or $b$ is in $\N_0$;} \\
    a+b, & \hbox{if $a,b\in \N_0$;} \\
    |a|+|b|, & \hbox{if  $a,b\in \ol{\N}_0$.} \\
  \end{array}
\right.
\end{equation}
\end{defn}

For $\bfs=(s_1, \ldots, s_m)\in \db^m$, we define the (alternating) multiple harmonic sums by
\begin{equation*}
H_n(\bfs):=\sum_{n\ge k_1>\cdots>k_m\ge 1}
\prod_{j=1}^m\frac{\sgn(s_j)^{k_j}}{k_j^{|s_j|}},\quad \text{and}\quad
H^{\star}_n(\bfs):=\sum_{n\ge k_1\ge\cdots \ge k_m\ge 1}
\prod_{j=1}^m\frac{\sgn(s_j)^{k_j}}{k_j^{|s_j|}}.
\end{equation*}
Correspondingly we can define the (alternating) Euler sums by
\begin{equation} \label{equ:ClassicalStarz}
\zeta(\bfs):=\sum_{k_1>\cdots>k_m\ge 1}
\prod_{j=1}^m\frac{\sgn(s_j)^{k_j}}{k_j^{|s_j|}},\quad \text{and}\quad
\zeta^{\star}(\bfs):=\sum_{k_1\ge\cdots\ge k_m\ge 1}
\prod_{j=1}^m\frac{\sgn(s_j)^{k_j}}{k_j^{|s_j|}}
\end{equation}
where $s_1\ne 1$ in order for the series to converge. If $\bfs\in\N^m$ then
$\zeta(\bfs)$ is called a \emph{multiple zeta value} (MZV) and $\zeta^{\star}(\bfs)$
a  \emph{multiple zeta star value} (MZSV).
We call $\ell(\bfs)=m$ the \emph{length} (or \emph{depth}) and $|\bfs|=|s_1|+\dots+|s_m|$
the \emph{weight} of the string~$\bfs$.
One of the central themes in the study of Euler sums, MZV and MZSV is to
find as many $\Q$-linear relations between these values as possible.
Conjecturally, nontrivial relations can exist only among MZV and MZSV of the same weight.
Following Glanois \cite{Glanois2015}, we define the Euler $\sharp$ sums by
\begin{equation*}
 \zeta^\sharp(\bfs):= \sum_{\bfp=s_1\circ s_2\circ\dots\circ s_d} 2^{\ell(\bfp)} \zeta(\bfp),
\end{equation*}
where $\bfp$ runs through all indices of the form $(s_1\circ s_2\circ\dots\circ s_d)$ with
``$\circ$'' being either the symbol ``,'' or the O-plus ``$\oplus$''.
In \cite{LinebargerZh2013,Zhao2013a} Linebarger and the third author obtained many
families of identities involving
both MHS and MZSV after getting inspiration from \cite{HessamiPilehrood2Ta2013}.
In particular, the third author proved the following so-called Two-one formula conjectured by
Ohno and Zudilin in \cite{OhnoZu2008}:
\begin{thm}\label{thm:MZSV21} \emph{(\cite[Theorem~1.3]{Zhao2013a})}
Let $r\in \N$ and $\bfs=(\{2\}^{a_1},1,\dots,\{2\}^{a_r},1)$
where $a_1\in\N$ and $a_j\in\N_0$ for all $j\ge 2$. Then we have
 \begin{equation*}
\zeta^\star(\bfs) = \zeta^\sharp( 2a_1+1,\dots, 2 a_r+1).
\end{equation*}
\end{thm}

One particularly well-behaved $q$-analog of the multiple zeta 
functions is defined in \cite{Zhao2007c}
by the third author, generalizing the Riemann $q$-zeta function studied by
Kaneko et al.\ \cite{KanekoKuWa2003}. There, again, it is very important to understand the
relations between their special values, see \cite{Bradley2005} for some relevant results.
Recently, the first two authors proved a $q$-analog of the Two-one formula
in \cite{HessamiPilehrood22013}. Our original goal of this paper
was to provide further analogs of the identities contained in \cite{LinebargerZh2013,Zhao2013a}.
However, we have achieved much more because we can now actually treat arbitrary $q$-MZSV and express it
in terms of $q$-analog Euler sums (of the non-star version).
By taking $q\to 1$ we obtain the following result.
\begin{thm} \label{thm:general-MZSV}
Let  $\bfs=(s_1,\dots,s_d)\in \N^d$ with $s_1>1.$  Set $\iota_\bfs =1$ if $s_d=1$ and
$\iota_\bfs =-1$ if $s_d>1$. Suppose there exists
$\bfgl=(\gl_1,\dots,\gl_m)\in\db^m$ (determined uniquely by $\bfs$) such that
there is an expansion of the form
\begin{equation*}
\zeta^{\star}(\bfs)=
   \iota_\bfs   \zeta^\sharp(\gl_1,\gl_2,\dots,\gl_m).
\end{equation*}
Then we have
\begin{enumerate}
\item[\upshape (i)] For any positive integer $a$,
\begin{equation*}
\zeta^{\star}(\{2\}^a, \bfs)=
   \iota_\bfs   \zeta^\sharp(2a\oplus\gl_1,\gl_2,\dots,\gl_m).
\end{equation*}

\item[\upshape (ii)] For any positive integers $a$ and $l$,
\begin{equation*}
\zeta^{\star}(\{2\}^a,\{1\}^l, \bfs)=
   \iota_\bfs   \zeta^\sharp(2a+1,\{1\}^{l-1},\gl_1,\gl_2,\dots,\gl_m).
\end{equation*}

\item[\upshape (iii)] For any positive integers $c\ge 3$ and $l$,
\begin{equation*}
\zeta^{\star}(c,\{1\}^l, \bfs)=
   \iota_\bfs   \zeta^\sharp(\ol{2}, \{1\}^{c-3},\ol{2},\{1\}^{l-1},\gl_1,\gl_2,\dots,\gl_m).
\end{equation*}

\item[\upshape (iv)] For any integers $b\ge 0$ and $c\ge 3$,
\begin{equation*}
\zeta^{\star}(\{2\}^b, c, \bfs)=
 \iota_\bfs   \zeta^\sharp(\ol{2b+2}, \{1\}^{c-3},\gl_1\oplus \ol{1},\gl_2,\dots,\gl_m).
\end{equation*}
\end{enumerate}
\end{thm}

This provides very elegant simplifications when $\bfs$ contains many 2's in it.
The following identity is an illuminating example:
for any $a\in\N$ and  $b,c,d\in\N_0$, we have
\begin{equation}\label{equ:keyExample}
\begin{split}
&\zeta^{\star}(\{2\}^{a},1,\{2\}^{b},1,\{2\}^{c},3,\{2\}^{d},1)\\
=&2\zeta(2a+2b+2c+2d+6) +4\zeta(2a+1,2b+2c+2d+5) \\
      +&4\zeta(2a+2b+2,2c+2d+4)
      +4\zeta(\ol{2a+2b+2c+4},\ol{2d+2}) \\
+&8\zeta(2a+1,2b+1,2c+2d+4)
+8\zeta(2a+1,\ol{2b+2c+3},\ol{2d+2})\\
+&8\zeta(2a+2b+2,\ol{2c+2},\ol{2d+2})
+16\zeta(2a+1,2b+1,\ol{2c+2},\ol{2d+2}).
\end{split}
\end{equation}
We also verified this identity numerically for $1\le a\le 2$ and $0\le b,c,d\le 2$ with EZ-face \cite{EZface}
with errors bounded by $10^{-50}$.

According to Theorem~\ref{thm:general-MZSV}, we can treat arbitrary
MZSV by building up from the three base cases:
$\zeta^{\star}(\{2\}^a)$, $\zeta^{\star}(\{2\}^a, 1)$ ($a\ge 1$)
and $\zeta^{\star}(\{2\}^b, c)$ ($b\ge 0$ and $c\ge 3$)
treated in \cite{HessamiPilehrood2Ta2013}. Here is the general statement.

\begin{thm} \label{thm:general-Glanois}
Let  $a_0,a_j\in \N_0$, $c_j\in \N$ and $c_j\ne 2$ for all $j=1,\dots,d$. Assume $a_0>0$
or $c_1\ge 3$. Set $\gd(c)=1$ if $c=1$ and $\gd(c)=0$ if $c\ge 3$. Moreover, put $\{1\}^{n}=\{1\}^{\max(n,0)}$.
Then we have
\begin{equation} \label{equ:Duality}
\zeta^{\star}(\{2\}^{a_0},c_1,\{2\}^{a_1},\dots,c_d,\{2\}^{a_d})=
 \pm \zeta^\sharp(B_0,\{1\}^{c_1-3},B_1,\dots,\{1\}^{c_d-3},B_d).
\end{equation}
Here the leading sign $\pm$ is $+$ if and only if $a_d=0$ and $c_d=1$, and
\begin{align*}
B_j=
\left\{
  \begin{array}{ll}
    A_j, & \hbox{if $A_j$ is odd;} \\
    \ol{A_j}, & \hbox{if $A_j>0$ and even;} \\
    \text{vacuous}, & \hbox{if $A_j=0$,}
  \end{array}
\right.
\end{align*}
where
\begin{align*}
A_j=
\left\{
  \begin{array}{ll}
    2a_0+2-\gd(c_1) , & \hbox{if $j=0$;} \\
    2a_d+1-\gd(c_d), & \hbox{if $j=d$;} \\
    2a_j+3-\gd(c_j)-\gd(c_{j+1}), & \hbox{if $0<j<d$.}
  \end{array}
\right.
\end{align*}
\end{thm}
Formula (\ref{equ:Duality}) can be considered as a general duality relation which expresses arbitrary multiple
zeta star value in terms of Euler $\sharp$ sums. It generalizes the Two-one formula and many other $2$-$c$-$2$-$c$, $2$-$1$-$2$-$c$, $2$-$c$-$2$-$1$
formulas with $c\ge 3$, proved by the third author  \cite{LinebargerZh2013, Zhao2013a}.  A $q$-analog of Theorem \ref{thm:general-Glanois} is given in Section \ref{S5},
which is Theorem \ref{thm:general-qGlanois}.

In her Ph.D. thesis, Glanois \cite{Glanois2015} studied motivic versions of multiple Euler $\sharp$ sums and proved that the motivic versions of
$$
\zeta^{\sharp}(\overline{2a_0+2}, 2a_1+3, \ldots, 2a_{d-1}+3, 2a_d+1),  \quad\text{with}\quad a_i\ge 0,
$$
form a graded basis of the space of motivic multiple zeta values. As a consequence, by application of the period map, she obtained the following important result.
\begin{thm}[Glanois]  \label{TG}
Each multiple zeta value is a $\Q$-linear combination of elements of the same weight in
$$
\{\zeta^{\sharp}(\overline{2a_0+2}, 2a_1+3, \ldots, 2a_{d-1}+3, 2a_d+1), \quad a_i\ge 0\}.
$$
\end{thm}
Note that Ihara \emph{et al.}\ \cite{Ihara-Kajikawa-Ohno-Okuda2011} conjectured that the Hoffman ${\star}$-elements $\zeta^{\star}(s_1,\dots,s_d)$ with $s_i\in\{2,3\}$
form a basis of the space of MZVs over $\Q$.  Taking into account the Two-three formula, which is a consequence of identity (\ref{equ:Duality}) with $c_1=\dots=c_d=3$,
$$
\zeta^\star(\{2\}^{a_0},3,\{2\}^{a_1}, \ldots, 3, \{2\}^{a_d})=-\zeta^{\sharp}(\overline{2a_0+2}, 2a_1+3, \ldots, 2a_{d-1}+3, 2a_d+1),
$$
and combining it with Theorem \ref{TG}, we get the following statement confirming the conjecture of Ihara \emph{et al.}
\begin{cor}
Every multiple zeta value of weight $w$ is a $\Q$-linear combination of 
the Hoffman ${\star}$-elements $\zeta^\star(s_1, \ldots, s_d)$ with $s_i\in\{2,3\}$ and $\sum s_i=w$.
\end{cor}

In her Ph.D. thesis, Glanois also conjectures that the motivic version of Theorem~\ref{thm:general-Glanois}
should hold (see \cite[Conjecture 4.5.1]{Glanois2015}), whose proof should follow
from Theorem~\ref{thm:general-Glanois} and a Galois descent argument used first
by Brown in \cite{Brown2012} to prove that all the periods of mixed motives
unramified over $\Z$ are $\Q[\frac1{2\pi i}]$-linear combinations of MZVs.
The motivic version of Theorem~\ref{thm:general-Glanois} would imply the motivic version of the Conjecture of
Ihara {\it et al}.\  for the space of motivic multiple zeta values.

\noindent
{\bf Acknowledgement.} KHP and THP gratefully acknowledge support from the  Research Immersion Fellowships of the Fields Institute.
JZ is partly supported by NSF grant DMS-1162116 and the Severo Ochoa Excellence Program at ICMAT.
He also would like to
thank the Max-Planck Institute for Mathematics, the Morningside Center of Mathematics in Beijing
and the Kavli Institute for Theoretical Physics China
for their hospitality where part of this work was done.

\section{Preliminaries and notations}
In this section, we first fix some notation. Throughout the paper let $m$
and $n$ denote nonnegative
integers and $q$ a real number with $0<q<1$. For any real number $a$, put
\begin{equation*}
(a)_0:=(a;q)_0:=1, \qquad (a)_n:=(a;q)_n:=\prod_{k=0}^{n-1}(1-aq^k), \quad n\ge 1.
\end{equation*}
As a convention, throughout the paper we always use $[\ ]$ to denote $q$-analog objects.
For example, the $q$-analog of a positive integer $n$ is given by
\begin{equation*}
[n]=[n]_q:=\sum_{k=0}^{n-1}q^k=\frac{1-q^n}{1-q},
\end{equation*}
and the Gaussian $q$-binomial coefficient
\begin{equation*}
\genf{n}{m}:=\begin{cases}
{\displaystyle \frac{(q)_n}{(q)_{m}(q)_{n-m}},} \quad\qquad &\text{if} \,\,\, 0\le m\le n, \\
\qquad 0, \quad\qquad & \text{otherwise.}
\end{cases}
\end{equation*}
For $m\in\N_0$ and $\bfs=(s_1, \ldots, s_m)\in\db^m_0$, we set $\bfs=\emptyset$ if $m=0$
and define the $q$-analogs of multiple harmonic (star) sums ($q$-MHS)
\begin{equation*}
H_n[\bfs]:=\sum_{n\ge k_1>\cdots>k_m\ge 1}
\prod_{j=1}^m\frac{\sgn(s_j)^{k_j} q^{k_j}}{[k_j]^{|s_j|}} \quad \text{and}\quad
H^{\star}_n[\bfs]:=\sum_{n\ge k_1\ge\cdots \ge k_m\ge 1}
\prod_{j=1}^m\frac{\sgn(s_j)^{k_j} q^{k_j}}{[k_j]^{|s_j|}},
\end{equation*}
with the convention that $H_n[\bfs]=0$ if $n<m$, and
$H^{\star}_n[\emptyset]=H_n[\emptyset]=1$ for all $n\ge 0$. Notice that we allow
$s_j$ to be $0$ or $\bar 0$ in these $q$-MHS.

Now we fix a symbol $\theta$ and define for any $r\in\Z\cup\{\theta\}$ and
$k\in\N$ the function
\begin{equation*}
   Q(r,k):=\left\{
            \begin{array}{ll}
              rk(k-1)/2, & \hbox{if $r>0$;} \\
              rk(k-1)/2-k, & \hbox{if $r\le 0$;} \\
              0, & \hbox{if $r=\theta$.}
            \end{array}
          \right.
\end{equation*}
For $\bfs=(s_1,\dots,s_m)\in\db_0^{m}$, $\bft=(t_1,\ldots, t_m)\in(\N_0)^m$, and
$\bfr=(r_1,\ldots,r_m)\in(\Z\cup\{\theta\})^m$, we
define the mollified companion of $H_n[\bfs]$ by
\begin{equation}\label{equ:calHn}
  \calH_n[\bfs;\bft;\bfr]:=
\sum_{n\ge k_1>\cdots >k_m\ge 1} \sqfr{n}{k_1}\,
 \prod_{j=1}^m  \frac{q^{t_jk_j+Q(r_j,k_j)}(1+q^{k_j})}{\sgn(s_j)^{k_j} [k_j]^{|s_j|}}.
\end{equation}
We call $[\bfs;\bft;\bfr]$ an \emph{admissible triple of mollifiers} if the limit
$\lim\limits_{n\to\infty}\calH_n[\bfs; \bft; \bfr]$ exists.

\begin{defn}\label{defn:squarePlus}
Let $\square$-\emph{plus} $\boxplus$ be a binary operation on
$\Z\cup \{\theta\}$ such that
\begin{itemize}
  \item  $\theta \boxplus a=a\boxplus \theta=a$ for all $a\in \Z\cup \{\theta\}$,
  \item  $a\boxplus b=a+b$ for all $a,b\in\Z$ with $(a,b)\ne (1,-1),(-1,1)$, and
  \item  $1\boxplus (-1)=\theta$ and $(-1)\boxplus 1=0$.
\end{itemize}
\end{defn}
\begin{lem} \label{lem:boxplus}
Let $r\in \{\theta \}\cup\Z\setminus\{0\},$ $d\in\{0, -1\}$. Then for any $k\in \Z$
\begin{align}
Q(r,k)+Q(1,k)&=Q(r\boxplus 1,k), \label{equ:Qr}\\
k^2+Q(d,k)&=Q(2\boxplus d,k). \label{equ:2rk}
\end{align}
Moreover, the projection
\begin{align*}
\pi:(\Z\cup\{\theta\},\boxplus)\lra \, & (\Z,+) \\
 a \lms \, & a            \quad \forall a\in \Z,\\
 \theta \lms \, & 0,
\end{align*}
is a homomorphism of semi-groups and its restriction to $\Z^*$ is injective.
\end{lem}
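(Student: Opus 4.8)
The plan is to verify the two identities \eqref{equ:Qr} and \eqref{equ:2rk} by splitting into cases according to the signs of the arguments, and then to check the semi-group homomorphism claim directly from Definition~\ref{defn:squarePlus}. The function $Q(r,k)$ is piecewise linear in $r$ (it equals $rk(k-1)/2$ with a correction term $-k$ exactly when $r\le 0$), so the only subtlety is bookkeeping the correction terms as $r$, $1$, and their $\boxplus$-sum cross between the positive and non-positive regimes.

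\medskip
For \eqref{equ:Qr}, first I would dispose of the case $r=\theta$: here $Q(\theta,k)=0$ and $\theta\boxplus 1=1>0$, so both sides equal $Q(1,k)=k(k-1)/2$. For $r\in\Z\setminus\{0\}$ I would write $Q(r,k)=rk(k-1)/2-\epsilon(r)\,k$, where $\epsilon(r)=1$ if $r\le 0$ and $\epsilon(r)=0$ if $r>0$, and similarly for $Q(1,k)$ (so $\epsilon(1)=0$) and for $Q(r\boxplus 1,k)$. Since $(r,1)\ne(-1,1)$ forces $r\ne -1$, Definition~\ref{defn:squarePlus} gives $r\boxplus 1=r+1$ in every remaining case, so the quadratic parts automatically match: $\frac{r+1}{2}k(k-1)=\frac{r}{2}k(k-1)+\frac12 k(k-1)$. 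The identity then reduces to checking that the linear corrections agree, i.e. $\epsilon(r)=\epsilon(r+1)$. This holds because $\epsilon$ only changes value across the transition $r=0\mapsto r=1$, and $r=0$ is excluded by hypothesis; for $r<0$ one has $r+1\le 0$, while for $r>0$ one has $r+1>0$, so $\epsilon$ is constant on each allowed stretch. The one boundary case $r=-1$ being excluded is precisely what makes the correction terms balance, and this is the main (though still routine) thing to watch.

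\medskip
For \eqref{equ:2rk} I would again split on $d\in\{0,-1\}$. Writing $k^2=2\cdot\frac{k(k-1)}{2}+k$, the left-hand side is $\frac{(2+d)k(k-1)}{2}+k-\epsilon(d)\,k$. When $d=0$ we have $2\boxplus 0=2>0$, so the right side is $\frac{2k(k-1)}{2}$ with no correction, and since $\epsilon(0)=1$ the $+k$ and $-\epsilon(0)k$ on the left cancel, matching. When $d=-1$ the special rule $2\boxplus(-1)=2+(-1)=1>0$ applies (note $(2,-1)$ is not one of the excluded pairs), so the right side is $\frac{k(k-1)}{2}$ with no correction; on the left, $\frac{(2-1)k(k-1)}{2}+k-\epsilon(-1)k=\frac{k(k-1)}{2}+k-k$ since $\epsilon(-1)=1$, again matching. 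Thus \eqref{equ:2rk} holds in both cases.

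\medskip
For the final assertion, I would verify directly that $\pi$ respects $\boxplus$: for $a,b\in\Z$ with $(a,b)\ne(\pm1,\mp1)$ we have $\pi(a\boxplus b)=a+b=\pi(a)+\pi(b)$; for the exceptional pairs, $\pi(1\boxplus(-1))=\pi(\theta)=0=1+(-1)$ and $\pi((-1)\boxplus 1)=\pi(0)=0=(-1)+1$, so compatibility holds there too; and whenever $\theta$ appears, $\pi(\theta\boxplus a)=\pi(a)=0+a=\pi(\theta)+\pi(a)$. Hence $\pi$ is a semi-group homomorphism. Injectivity on $\Z^*=\Z\setminus\{0\}$ is immediate since $\pi$ is the identity there. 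The only conceptual point worth flagging is that $\boxplus$ is genuinely non-cancellative and not even well-behaved enough to be a group operation (e.g.\ $1\boxplus(-1)=\theta$ but $(-1)\boxplus 1=0$, so it is not commutative), which is why the statement claims only a semi-group homomorphism and restricts injectivity to $\Z^*$; no step here is an obstacle, but one must resist the temptation to assume symmetry of $\boxplus$.
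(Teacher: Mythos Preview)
Your argument is correct and is exactly the kind of routine case check the paper has in mind; the paper's own proof is simply the word ``Clear.'' One small slip: you write that ``the boundary case $r=-1$ being excluded is precisely what makes the correction terms balance,'' but $r=-1$ is \emph{not} excluded by the hypothesis $r\in\{\theta\}\cup\Z\setminus\{0\}$. Fortunately your computation already covers it without change: on the $\boxplus$ side, the special rule $(-1)\boxplus 1=0$ coincides with the generic $r+1$, and on the correction side $\epsilon(-1)=\epsilon(0)=1$, so $\epsilon(r)=\epsilon(r+1)$ still holds. Once that parenthetical remark is corrected, your write-up is a complete verification.
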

\begin{proof} Clear.

\end{proof}
For an admissible triple of mollifiers $[\bfs;\bft;\bfr]$, we define
\begin{equation*}
\{s_1\circ \dots\circ s_m;
t_1\circ \dots\circ t_m; r_1\circ \dots\circ r_m\}
\end{equation*}
to be the set of triples of strings produced by replacing every
$\circ$ in $\bfs$ by either comma ``,'' or O-plus ``$\oplus$'', replacing every
$\circ$ in $\bft$  by either comma ``,'' or the usual plus ``$+$'', and
replacing every $\circ$  in $\bfr$ by either comma ``,'' or $\square$-plus ``$\boxplus$''. Moreover,
the commas should be at the same positions for all  $\bfs$, $\bft$ and~$\bfr$.
Now we set
\begin{equation*}
\calH_n^\sharp[\bfs;\bft;\bfr]:=\sum_{(\bfp;\wbfp;\wwbfp)\in
\{s_1\circ \dots\circ s_m; t_1\circ \dots\circ t_m; r_1\circ \dots\circ r_m\}}
\calH_n[\bfp;\wbfp;\wwbfp].
\end{equation*}

In the above  notation, the Two-one formulas for $q$-MHS obtained in
\cite{HessamiPilehrood22013} have the form
\begin{equation} \label{equ:H2a}
H_n^{\star}[\{2\}^a]=-\calH_n[\ol{2a};a;1]
\end{equation}
and, for $a_{\ell+1}\ne 0$
\begin{equation} \label{equ:H2a1}
\begin{split}
H_n^{\star}[\{2\}^{a_1},1,\dots,\{2\}^{a_\ell},1]
=\phantom{-} \calH_n^\sharp[2& a_1+1,\dots,2a_\ell+1;\\
    & a_1+1,\dots,a_\ell+1; 2, \{0\}^{\ell-1}],\\
H_n^{\star}[\{2\}^{a_1},1,\ldots,\{2\}^{a_\ell},1,\{2\}^{a_{\ell+1}}]
=-\calH_n^\sharp[ 2&a_1+1,\dots,2a_\ell+1,\ol{2a_{\ell+1}};\\
   & a_1+1,\dots,a_\ell+1,a_{\ell+1}; 2, \{0\}^{\ell-1},-1].
\end{split}
\end{equation}

Finally, we define the $q$-analog of \emph{multiple zeta values}, $q$-MZV for short, and
$q$-analog of \emph{multiple zeta star values}, or $q$-MZSV, as
\begin{equation*}
\zeta[\bfs]:=\sum_{k_1>\cdots>k_m\ge 1} \prod_{j=1}^m
    \frac{\sgn(s_j)^{k_j}q^{k_j}}{[k_j]^{|s_j|}},\quad\text{and}\quad
\zeta^{\star}[\bfs]:=\sum_{k_1\ge\cdots\ge k_m\ge 1} \prod_{j=1}^m
    \frac{\sgn(s_j)^{k_j} q^{k_j}}{[k_j]^{|s_j|}},
\end{equation*}
respectively. The \emph{mollified companion} of $\zeta[\bfs]$ associated
with the admissible triple of mollifiers $[\bfs;\bft;\bfr]$ is defined by
\begin{equation*}
\begin{split}
\frakz[\bfs;\bft;\bfr]:=&\frakz[s_1,\ldots,s_m; t_1,\ldots,t_m; r_1,\ldots, r_m] \\
= &\sum_{k_1>\cdots >k_m\ge 1}
 \prod_{j=1}^m  \frac{\sgn(s_j)^{k_j} q^{t_jk_j+Q(r_j,k_j)}(1+q^{k_j})}{[k_j]^{|s_j|}},
\end{split}
\end{equation*}
and its $\sharp$-version is defined by
\begin{equation*}
\frakz^\sharp[\bfs;\bft;\bfr]:=\sum_{(\bfp;\wbfp;\wwbfp)\in
\{s_1\circ \dots\circ s_m; t_1\circ \dots\circ t_m; r_1\circ \dots\circ r_m\}}
\frakz[\bfp;\wbfp;\wwbfp].
\end{equation*}
If $m=0$, we put $\zeta^{\star}[\emptyset]=\frakz[\emptyset;\emptyset;\emptyset]=1.$
Throughout the paper the triples of mollifiers $[\bfs; \bft; \bfr]$ are chosen in such a way that
the above multiple series always converges. Notice that in \cite{HessamiPilehrood22013} the
mollified companions of $\zeta[\bfs]$ are defined similarly.

Although our primary goal is to prove $q$-MZSV identities, throughout the paper
we will always work with binomial identities for $q$-MHS first.
To obtain the corresponding $q$-MZSV identities, we need the next result.

\begin{lem} \label{lem:last} \emph{(\cite[Lemma 4.1]{HessamiPilehrood22013})}
Let $0<q<1$, $c, c_1, c_2\in {\mathbb R}$, $c>0$, and let $R_k$ be a sequence of real
numbers satisfying $|R_k|<k^{c_1}q^{c_2k}$ for all $k=1,2,\ldots$. Then
\begin{equation*}
\lim_{n\to\infty}\sum_{k=1}^nq^{ck^2}\left(1-\sqfr{n}{k}\right)R_k=0.
\end{equation*}
\end{lem}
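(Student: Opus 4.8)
The plan is to reduce the whole expression to an explicit upper bound on $1-\sqfr{n}{k}$ and then to exploit the super-exponential decay furnished by the hypothesis $c>0$. First I would put the ratio into product form. Starting from $\genf{n}{k}=(q)_n/\bigl((q)_k(q)_{n-k}\bigr)$ and cancelling, one finds
\[
\sqfr{n}{k}=\frac{(q)_n^2}{(q)_{n-k}\,(q)_{n+k}}=\prod_{i=1}^{k}\frac{1-q^{\,n-k+i}}{1-q^{\,n+i}}.
\]
For $1\le k\le n$ each factor lies in $(0,1)$, because the numerator exponent $n-k+i$ is strictly smaller than the denominator exponent $n+i$; hence $0<\sqfr{n}{k}<1$, the quantity $1-\sqfr{n}{k}$ is positive, and I may pass freely to absolute values in the sum.

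Next I would estimate $1-\sqfr{n}{k}$. Writing $a_i=(1-q^{\,n-k+i})/(1-q^{\,n+i})\in[0,1]$ and using the elementary inequality $1-\prod_i a_i\le\sum_i(1-a_i)$ together with $1-a_i=q^{\,n-k+i}(1-q^k)/(1-q^{\,n+i})$ and the crude lower bound $1-q^{\,n+i}\ge 1-q$, a single geometric summation gives
\[
0\le 1-\sqfr{n}{k}\le\frac{(1-q^{k})^{2}}{(1-q)^{2}}\,q^{\,n-k+1}\le\frac{q^{\,n-k+1}}{(1-q)^{2}}.
\]
The essential feature is that this bound carries a global factor $q^{\,n}$, which tends to $0$ uniformly, while costing at most $q^{-k}$ in return.

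Finally, inserting this estimate and the hypothesis $|R_k|<k^{c_1}q^{c_2k}$ yields
\[
\Biggl|\sum_{k=1}^{n}q^{ck^2}\bigl(1-\sqfr{n}{k}\bigr)R_k\Biggr|
\le\frac{q^{\,n+1}}{(1-q)^{2}}\sum_{k=1}^{n}k^{c_1}\,q^{\,ck^2+(c_2-1)k}.
\]
Since $c>0$ and $0<q<1$, the Gaussian weight $q^{ck^2}$ decays faster than any fixed exponential, so it absorbs both the linear exponent $q^{(c_2-1)k}$ and the polynomial $k^{c_1}$ regardless of the signs of $c_1,c_2$; therefore $S:=\sum_{k\ge 1}k^{c_1}q^{ck^2+(c_2-1)k}<\infty$ and bounds the inner sum uniformly in $n$. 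The right-hand side is then at most $q^{\,n+1}S/(1-q)^2$, which tends to $0$, proving the lemma. The one genuinely delicate step is the second one: the bound on $1-\sqfr{n}{k}$ must isolate a decaying factor $q^{n}$ while leaving behind no worse than $q^{-k}$, since a coarser estimate that either loses the $q^{-k}$ or fails to separate the $n$-dependence would not recombine with $q^{ck^2}$ into a convergent, $n$-uniform residual series.
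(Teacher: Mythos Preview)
Your argument is correct. The product representation of $\sqfr{n}{k}$, the telescoping-style bound $1-\prod a_i\le\sum(1-a_i)$, and the resulting estimate $1-\sqfr{n}{k}\le q^{\,n-k+1}/(1-q)^2$ are all accurate, and the final step---extracting the global factor $q^{n+1}$ and controlling the remaining series by the Gaussian weight $q^{ck^2}$---is exactly what is needed.

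As for comparison with the paper: there is nothing to compare. The paper does not prove this lemma; it merely cites it as \cite[Lemma~4.1]{HessamiPilehrood2013} and uses it as a black box to pass from the $q$-MHS identities to the $q$-MZSV identities via Lemma~\ref{lem:last}. Your write-up therefore supplies a self-contained proof that the paper itself omits.
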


\section{$q$-binomial identities}
The following two combinatorial identities have been proved by
the first two authors using $q$-WZ method.
\begin{lem} \label{lem:l1} \emph{(\cite[Lemma 2.1]{HessamiPilehrood22013})}
For integers $n\ge 1$ and $l\ge 0$, we have
\begin{align}
\sum_{k=l+1}^n(1+q^k)\frac{\genf{n}{k}}{\genf{n+k}{k}}(-1)^{k}
q^{k(k-1)/2}&=\frac{[l]-[n]}{[n]}\frac{\genf{n}{l}}{\genf{n+l}{l}}(-1)^{l}
q^{l(l-1)/2}, \label{equ:i1}\\
\sum_{k=l+1}^n(1+q^k)\,\frac{[k]\genf{n}{k}}{\genf{n+k}{k}}
q^{k(k-1)}&=([n]-[l])\frac{\genf{n}{l}}{\genf{n+l}{l}}\, q^{l^2} \label{equ:i2}.
\end{align}
\end{lem}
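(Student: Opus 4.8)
The plan is to prove both identities by telescoping in the summation index $k$: I would produce, for each left-hand summand, an explicit antidifference so that the sum collapses to a single boundary term equal to the right-hand side. This is precisely what the $q$-Gosper/$q$-WZ algorithm delivers automatically---each summand is a $q$-hypergeometric term in $k$ and each target is again a single such term, so both sums are Gosper-summable---but the certificates turn out to be simple enough to write down and verify by hand. The one computational fact I would record at the outset is the closed form
\[
\sqfr{n}{k}=\frac{(q)_n^2}{(q)_{n-k}(q)_{n+k}},\qquad
\frac{\sqfr{n}{k}}{\sqfr{n}{k-1}}=\frac{1-q^{n-k+1}}{1-q^{n+k}},
\]
which is immediate from the definition of $\genf{\cdot}{\cdot}$ and controls all the $q$-binomial ratios below.

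For \eqref{equ:i1} I would set
\[
B_k:=\frac{[k]-[n]}{[n]}\,\sqfr{n}{k}\,(-1)^k q^{k(k-1)/2},
\]
so that the claimed right-hand side is exactly $B_l$ while $B_n=0$. The identity then reduces to the telescoping relation
\[
(1+q^k)\,\sqfr{n}{k}\,(-1)^k q^{k(k-1)/2}=B_{k-1}-B_k,
\]
since summing this over $l+1\le k\le n$ collapses the right-hand side to $B_l-B_n=B_l$. In the same way, for \eqref{equ:i2} I would set $C_k:=([n]-[k])\,\sqfr{n}{k}\,q^{k^2}$, observe that the right-hand side equals $C_l$ and $C_n=0$, and reduce the claim to
\[
(1+q^k)\,[k]\,\sqfr{n}{k}\,q^{k(k-1)}=C_{k-1}-C_k.
\]

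To verify each telescoping relation I would divide through by $\sqfr{n}{k-1}$, insert the ratio recorded above, and use $[k]=(1-q^k)/(1-q)$; each identity then becomes a rational identity in $q^k$ and $q^n$ whose verification funnels through the single elementary relation $[k]+q^k[n]=[n+k]$ (together with the power alignments $q^{k(k-1)/2}=q^{k-1}q^{(k-1)(k-2)/2}$ and $q^{k(k-1)}=q^{k-1}q^{(k-1)^2}$). The substantive step, and the one I expect to be the only real obstacle, is discovering the antidifferences $B_k$ and $C_k$; once they are in hand the checks are purely mechanical. That discovery is exactly what $q$-Gosper automates, so in practice I would let the algorithm supply the certificates. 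Should a direct certificate prove awkward, an equally viable fallback is creative telescoping: derive a first-order recurrence in $n$ for each side, check that both sides obey it, and match a single initial value---e.g.\ $l=n-1$, where each identity reduces to its top term $k=n$.
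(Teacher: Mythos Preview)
Your proposal is correct and essentially coincides with the paper's approach: the paper does not reprove the lemma but cites \cite{HessamiPilehrood2013}, where it is established by the $q$-WZ method, and your explicit antidifferences $B_k$ and $C_k$ are exactly the Gosper certificates that method produces (I checked both telescoping relations and they reduce, as you say, to $[n+k]-q^k([n]-[k])=(1+q^k)[k]$ and its companion). There is nothing to add.
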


The next lemma is the $q$-analog of \cite[Lemma 2.1]{Zhao2013a}.
\begin{lem}\label{lem:combinatorial}
Let $a\in \db_0$, $b\in \N_0$, $c\in\N$, $r\in \{\theta \}\cup\Z\setminus\{0\}$,
and $[\bfx;\bfy;\bfz]$ an admissible triple of mollifiers.
Then for any positive integer $n$,
\begin{equation*}
 \frac{1}{[n]^c}\calH_n[a,\bfx; b, \bfy; r\boxplus 1, \bfz]
=\sum_{(\bfp;\wbfp;\wwbfp)\in\{\ol{0}\circ 1^{\circ(c-1)}\circ (a \oplus \ol{1});
    \, 0^{\circ c}\circ\, b;\, 1\circ\, \theta^{\circ (c-1)}\circ r\}}
    \calH_n[\bfp, \bfx; \wbfp, \bfy; \wwbfp, \bfz].
\end{equation*}
\end{lem}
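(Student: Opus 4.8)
The plan is to induct on $c$, the entire weight of the argument resting on the base case $c=1$, where the single combinatorial identity \eqref{equ:i1} is used; the inductive step is then pure bookkeeping with the three semigroup operations. This mirrors the structure of the classical \cite[Lemma 3.1]{Zhao2013a}, with \eqref{equ:i1} playing the role of its $q$-analog. For $c=1$ the assertion reads
\[
\frac{1}{[n]}\calH_n[a,\bfx;b,\bfy;r\boxplus 1,\bfz]
=\calH_n[\ol{0},a\oplus\ol{1},\bfx;0,b,\bfy;1,r,\bfz]
+\calH_n[\ol{0}\oplus(a\oplus\ol{1}),\bfx;b,\bfy;r\boxplus 1,\bfz],
\]
the two summands being the comma- and O-plus-resolutions of the single $\circ$.

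I would prove the base case by peeling off the top summation index. Let $G(k)$ denote the inner sum over $\bfx,\bfy,\bfz$ with all indices $<k$; by \eqref{equ:calHn} this carries no $q$-binomial factor, since $\sqfr{n}{k_1}$ sits only on the largest index. Then the left side is $\tfrac{1}{[n]}\sum_{k}\sqfr{n}{k}\,\tfrac{q^{bk+Q(r\boxplus 1,k)}(1+q^k)}{\sgn(a)^k[k]^{|a|}}\,G(k)$, while in the first right-hand summand the new top index $j$ attached to $\ol{0}$ (with $|\ol{0}|=0$) contributes $\sum_{j=k+1}^n(1+q^j)\sqfr{n}{j}(-1)^jq^{j(j-1)/2}$, which by \eqref{equ:i1} collapses to $\tfrac{[k]-[n]}{[n]}\sqfr{n}{k}(-1)^kq^{Q(1,k)}$. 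Now three elementary facts from Definition~\ref{defn:dbANDoplus} and Lemma~\ref{lem:boxplus} make everything recombine: $\sgn(a\oplus\ol{1})=-\sgn(a)$ absorbs the sign $(-1)^k$, the weight relation $|a\oplus\ol{1}|=|a|+1$ produces an extra factor $1/[k]$, and the additivity $Q(1,k)+Q(r,k)=Q(r\boxplus 1,k)$ from \eqref{equ:Qr} reassembles the powers of $q$. Splitting $\tfrac{[k]-[n]}{[n][k]}=\tfrac{1}{[n]}-\tfrac{1}{[k]}$ then identifies the $\tfrac{1}{[n]}$-piece with the left side and the $-\tfrac{1}{[k]}$-piece with the negative of the second summand (whose merged head has weight $|a|+1$, sign $\sgn(a)$, and mollifier $r\boxplus 1$), so the two right-hand summands add up to the left side.

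For the inductive step I would write $\tfrac{1}{[n]^c}=\tfrac{1}{[n]^{c-1}}\cdot\tfrac{1}{[n]}$, apply the base case to the inner factor to split $\calH_n[a,\bfx;b,\bfy;r\boxplus 1,\bfz]/[n]$ into the two terms above, and then apply the induction hypothesis (for $c-1$) to $\tfrac{1}{[n]^{c-1}}$ of each. For the $\ol{0}$-term one takes $r'=\theta$, so that its head mollifier $1=\theta\boxplus 1$ has the shape required by the hypothesis, and uses $\ol{0}\oplus\ol{1}=1$; for the merged term one takes the new head $\ol{0}\oplus(a\oplus\ol{1})$ with $r'=r$. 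Simplifying the outputs with $\ol{0}\oplus\ol{1}=1$, $\theta\boxplus 1=1$, $1\boxplus\theta=1$ shows that the two resulting families together are exactly the $2^c$ expansions of $\{\ol{0}\circ 1^{\circ(c-1)}\circ(a\oplus\ol{1});\,0^{\circ c}\circ b;\,1\circ\theta^{\circ(c-1)}\circ r\}$: the $\ol{0}$-term supplies precisely those expansions in which the last $\circ$ (the one adjoining $a\oplus\ol{1}$) is a comma, and the merged term supplies those in which it is an O-plus. A count confirms the bijection, each family contributing $2^{c-1}$ terms.

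The principal difficulty is the simultaneous three-fold bookkeeping: every resolution of a $\circ$ must be tracked coherently in $\bfs$ (via $\oplus$, governing sign and weight), in $\bft$ (via $+$), and in $\bfr$ (via $\boxplus$, governing the $Q$-exponent), and one must verify compatibility at each merge, which is exactly what Lemma~\ref{lem:boxplus} guarantees. The genuinely delicate point is the behavior of $\boxplus$ at the pair $(1,-1)$: since $\boxplus$ is non-commutative only there, one must check that the merge of the freshly created mollifier $1$ with the adjacent $r$ yields $r\boxplus 1$ (the order for which \eqref{equ:Qr} is stated), so that the exponent bookkeeping stays correct even in the boundary case $r=-1$, where $r\boxplus 1=0$ rather than $1\boxplus r=\theta$; this is precisely the situation the definition of $\boxplus$ is calibrated to handle. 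A secondary, routine point is that the prepending operations preserve admissibility of the triple of mollifiers, so that the hypothesis applies at each stage and the finite-$n$ identity remains available for the later passage to the limit.
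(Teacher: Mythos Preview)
Your proposal is correct and follows essentially the same route as the paper's proof: induction on $c$, with the base case $c=1$ established by summing the top index via \eqref{equ:i1} (the paper packages this as \eqref{equ:sumAnk}) together with the sign/weight identities for $\oplus$ and the exponent identity \eqref{equ:Qr}, and the inductive step carried out by writing $1/[n]^c=1/[n]^{c-1}\cdot 1/[n]$, applying the base case, and then invoking the case $c-1$ separately on the comma-term (with $a=\ol{0}$, $b=0$, $r=\theta$) and on the merged term (with $a$ replaced by $a\oplus 1$). Your observation about the non-commutativity of $\boxplus$ at $(1,-1)$ and the need for the merged mollifier to be $r\boxplus 1$ rather than $1\boxplus r$ is exactly the subtlety the paper's definition is designed to absorb.
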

\begin{proof}
We prove the lemma by induction on $c$. Suppose strings $\bfx, \bfy, \bfz$ have length $m$. Set
\begin{equation} \label{equ:ank}
A_{n,k}=(-1)^{k}(1+q^k)  q^{k(k-1)/2}\frac{\genf{n}{k}}{\genf{n+k}{k}}.
\end{equation}
Then by \eqref{equ:i1},
\begin{equation}\label{equ:sumAnk}
\frac{(1+q^l)}{[l]}\sum_{k=l+1}^n A_{n,k}=\left(\frac{1}{[n]}-\frac{1}{[l]}\right)A_{n,l}
\end{equation}
which, together with~\eqref{equ:Qr} yields
\begin{equation*}
\begin{split}
&\quad\calH_n[\ol{0},a \oplus \ol{1},\bfx; 0,b, \bfy; 1, r, \bfz] \\[5pt]
&=
\sum_{n\ge k>k_0>k_1>\cdots>k_m\ge 1}\!\!
    \frac{A_{n,k}q^{bk_0+Q(r,k_0)}(1+q^{k_0})}{(-\sgn(a))^{k_0} [k_0]^{|a|+1}}\prod_{j=1}^m
    \frac{q^{y_jk_j+Q(z_j,k_j)}(1+q^{k_j})}{ \sgn(x_j)^{k_j}[k_j]^{|x_j|}}\\[3pt]
&=\sum_{n\ge k_0>k_1>\cdots>k_m\ge 1}\frac{q^{bk_0+Q(r,k_0)}(1+q^{k_0})}{(-\sgn(a))^{k_0} [k_0]^{|a|+1}}\prod_{j=1}^m
\frac{q^{y_jk_j+Q(z_j,k_j)}(1+q^{k_j})}{ \sgn(x_j)^{k_j}[k_j]^{|x_j|}}
\sum_{k=k_0+1}^n A_{n,k} \\[3pt]
&=
\frac{1}{[n]}\calH_n[a,\bfx; b, \bfy; r\boxplus 1, \bfz]-\calH_n[a \oplus 1,\bfx; b, \bfy; r\boxplus 1, \bfz].
\end{split}
\end{equation*}
This proves the lemma for $c=1.$ Now suppose $c>1.$ By the case $c=1$, we have
just proved,
\begin{equation*}
\begin{split}
\frac{1}{[n]^c}&\calH_n[a,\bfx; b, \bfy; r\boxplus 1, \bfz]=
\frac{1}{[n]^{c-1}}\left(\frac{1}{[n]}\calH_n[a,\bfx; b, \bfy; r\boxplus 1, \bfz]\right) \\
&=\frac{1}{[n]^{c-1}}\calH_n[a\oplus 1,\bfx; b, \bfy; r\boxplus 1, \bfz]
+\frac{1}{[n]^{c-1}}\calH_n[\ol{0},a\oplus \ol{1},\bfx; 0,b, \bfy; 1, r, \bfz].
\end{split}
\end{equation*}
For the first summand,  we now apply induction assumption using case $c-1$
with $a$ replaced by $a\oplus 1$.
For the second summand, we apply Lemma~\ref{lem:combinatorial} using
case $c-1$ with $a=\ol{0},$ $b=0,$ and $r=\theta$. Then we see
the above is equal to
\begin{equation*}
\begin{split}
&\sum_{(\bfp, \wbfp, \wwbfp)\in \{\ol{0}\circ 1^{\circ(c-2)}\circ\big((a\oplus 1)\oplus\ol{1}\big)
    ;\,  0^{\circ(c-1)}\circ b;
    \,  1\circ \theta^{\circ(c-2)}\circ r\}}
\calH_n[\bfp, \bfx; \wbfp,  \bfy; \wwbfp, \bfz] \\[5pt]
&+
\sum_{(\bfp, \wbfp, \wwbfp)\in\{\ol{0}\circ 1^{\circ(c-1)};
    \, 0^{\circ c};
    \, 1\circ \theta^{\circ(c-1)}\}}
\calH_n[\bfp,  a\oplus\ol{1}, \bfx; \wbfp, b, \bfy; \wwbfp, r, \bfz] \\[5pt]
&=
\sum_{(\bfp, \wbfp, \wwbfp)\in \{\ol{0}\circ 1^{\circ(c-1)}\circ(a\oplus\ol{1});
    \, 0^{\circ c}\circ b;
    \, 1\circ \theta^{\circ (c-1)}\circ r\}}
\calH_n[\bfp, \bfx; \wbfp, \bfy; \wwbfp, \bfz],
\end{split}
\end{equation*}
since $(a\oplus 1)\oplus\ol{1}=1\oplus ( a\oplus\ol{1})$ for all $a\in \db_0$.
We have now completed the proof of the lemma.
\end{proof}

The next corollary is the degenerate case of the proceeding lemma.
\begin{cor}\label{cor:combinatorial}
For all $c\in\N_0$, we have
\begin{equation*}
 \frac{1}{[n]^c}=- \calH_n^\sharp [\ol{0},\{1\}^c; \{0\}^{c+1}; 1, \{\theta\}^c].
\end{equation*}
\end{cor}
\begin{proof}
The case $c=0$ follows from  \eqref{equ:i1} by setting $l=0$.
For $c\ge 1$, using the $c=0$ case, we get
\begin{equation*}
  \frac1{[n]^c}=-\frac1{[n]^{c}}\calH_n[\bar 0; 0; 1]=
-\sum_{(\bfp;\wbfp;\wwbfp)\in \Pi(c)} \calH_n[\bfp; \wbfp; \wwbfp]
\end{equation*}
by taking $a=\bar 0$, $b=0$, $r=\theta$ and $\bfx=\bfy=\bfz=\emptyset$ in Lemma~\ref{lem:combinatorial}.
Hence the corollary is proved.
\end{proof}

\section{MHS and MZSV identities: $\bt\hyf c$ formula}\label{sec:case2bc}
In this section, we start with some $q$-MHS identities involving
arguments of $(\{2\}^a,c)$-type ($c\ge 3$). This provides one of the base cases
upon which we may build general formulas of $q$-MZSV and MZSV whose arguments
can be any admissible strings of positive integers.

\begin{thm} \label{thm:MHS2c}
Let $\bfs=(\{2\}^a,c)$ with $a, c\in\N_0$ and $c\ge 3$. Then
\begin{equation}\label{equ:H2c}
 H^\star_n[\bfs]=- \calH_n^\sharp[\ol{2a+2},\{1\}^{c-2};a+1,\{0\}^{c-2};1,\{\theta\}^{c-2}].
\end{equation}
\end{thm}
\begin{proof}
We proceed by induction on $n$. Set $\Pi(\bfs)=\{(\ol{2a+2})\circ 1^{\circ(c-2)};(a+1)\circ 0^{\circ(c-2)};1\circ \theta^{\circ(c-2)}\}.$
When $n=1$, we have $H^\star_1(\{2\}^a,c) = q^{a+1}.$
On the other hand,
\begin{equation*}
\sum_{(\bfp; \wbfp; \wwbfp)\in \Pi(\bfs)}\calH_1[\bfp; \wbfp; \wwbfp]=
\calH_1[\ol{2a+c}; a+1; 1]=-q^{a+1},
\end{equation*}
and therefore the formula is true.
Suppose the statement is true  for $n-1$. Then by definition
\begin{equation*}
H^\star_n[\bfs] = \sum_{i=0}^{a} \frac {q^{n(a-i)}}{[n]^{2(a-i)}} H^\star_{n-1}[\{2\}^i, c]
+\frac {q^{n(a+1)}}{[n]^{2a+ c}}.
\end{equation*}
Applying  inductive hypothesis, we obtain
\begin{equation} \label{hyp}
H^\star_n[\bfs]=
 -\sum_{i=0}^{a} \frac {q^{n(a-i)}}{[n]^{2(a-i)}}
 \sum_{(\bfp;\wbfp; \wwbfp)\in\Pi(\{2\}^i, c)}\calH_{n-1}[\bfp; \wbfp; \wwbfp]+\frac{q^{n(a+1)}}{[n]^{2a+ c}}.
\end{equation}
Set $\Pi(\bfu_{-1})=\{0\circ 1^{\circ(c-2)};\, 0^{\circ(c-1)};\, \theta^{\circ(c-1)} \}.$
To save space, for any string $\bfgl=(\gl_1,\dots,\gl_m)$, we write the substring $\bfgl_\hone=(\gl_2,\dots,\gl_m)$.
Then the inner sum in \eqref{hyp} becomes
\begin{equation*}
\begin{split}
\sum_{(\bfp;\wbfp; \wwbfp)\in\Pi(\{2\}^i, c)}\calH_{n-1}&[\bfp; \wbfp; \wwbfp]
=\sum_{(\bfp;\wbfp; \wwbfp)\in\Pi(\bfu_{-1})}\!\calH_{n-1}[\ol{2i+2+p}_1,\bfp_\hone;
        i+1+\wdp_1, \wbfp_\hone; 1\boxplus \wwdp_1,\wwbfp_\hone] \\
&=\sum_{(\bfp;\wbfp; \wwbfp)\in\Pi(\bfu_{-1})}\,\sum_{n> k_1>\ldots>k_m\ge 1}
    \frac{q^{k_1(i+1+\wdp_1)}A_{n-1,k_1}}{[k_1]^{2i+2+p_1}}\,
    \prod_{j=2}^m\frac{q^{\wdp_j k_j}(1+q^{k_j})}{[k_j]^{p_j}},
\end{split}
\end{equation*}
where $A_{n,k}$ is defined in \eqref{equ:ank}. Plugging this into
\eqref{hyp} and summing over $i$ by the formula
\begin{equation} \label{equ:geomSum}
A_{n-1,k}\sum_{i=0}^{a}\frac{[n]^{2i}}{[k]^{2i}}\,q^{(k-n)i}=
A_{n,k}\left(\frac{[n]^{2a}}{[k]^{2a}}\,q^{(k-n)a}-\frac{[k]^2}{[n]^2}\,q^{n-k}\right),
\end{equation}
we obtain
\begin{equation*}
\begin{split}
H^\star_n[\bfs]&=-\sum_{(\bfp;\wbfp; \wwbfp)\in\Pi(\bfu_{-1})}\,
    \sum_{n\ge k_1>\cdots>k_m\ge 1}\frac{q^{k_1(\wdp_1+a+1)}A_{n,k_1}}{[k_1]^{2a+2+p_1}}\,
    \prod_{j=2}^m\frac{q^{\wdp_j k_j}(1+q^{k_j})}{[k_j]^{p_j}} \\[3pt]
&+\frac{q^{n(a+1)}}{[n]^{2a+2}}
    \sum_{(\bfp;\wbfp; \wwbfp)\in\Pi(\bfu_{-1})}\,\sum_{n\ge k_1>\cdots>k_m\ge 1}
    \frac{q^{k_1\wdp_1}A_{n,k_1}}{[k_1]^{p_1}}\,
    \prod_{j=2}^m\frac{q^{\wdp_j k_j}(1+q^{k_j})}{[k_j]^{p_j}}
 + \frac{q^{n(a+1)}}{[n]^{2a+c}},
\end{split}
\end{equation*}
which implies
\begin{multline*}
H^\star_n[\bfs]=-\sum_{(\bfp;\wbfp; \wwbfp)\in\Pi(\bfs)}\calH_n[\bfp; \wbfp; \wwbfp]
+\frac{q^{n(a+1)}}{[n]^{2a+2}}\sum_{(\bfp;\wbfp; \wwbfp)\in\Pi(\ttbfu_{-1})}\calH_n[\bfp; \wbfp; \wwbfp]
    +\frac{q^{n(a+1)}}{[n]^{2a+c}},
\end{multline*}
where $\Pi(\ttbfu_{-1})=\{\ol{0}\circ 1^{\circ(c-2)};\, 0^{\circ(c-1)};\, 1\circ\theta^{\circ(c-2)}\}.$
Hence the theorem follows from Corollary~\ref{cor:combinatorial} immediately by replacing $c$ by $c-2$ there.
\end{proof}

\section{MHS and MZSV identities: general case}
\label{S5}

In this section, we prove some general rules which explain what to expect when
we add strings $(\{2\}^a,\{1\}^l)$ or $(\{2\}^b,c)$ to a string of positive integer arguments.
This allows us to extend expansion formulas from the  three base cases \eqref{equ:H2a},
\eqref{equ:H2a1} and \eqref{equ:H2c}  to every string that contains an arbitrary
number of repetitions of $(\{2\}^b,c)$, $(c,\{1\}^l)$ ($b\ge 0$, $c\ge 3$, $l\ge 1$) and
$(\{2\}^a,\{1\}^l)$ ($a\ge 0$ except at the leading position when $a\ge 1$).
For example, $(3,1,2,7,1,1,5,2,2,4)$
can be written as $(\{2\}^0,3,\{2\}^0,1,\{2\}^1,7,\{2\}^0,\{1\}^2,\{2\}^0,5,\{2\}^2,4)$.

For any string $\bfgl=(\gl_1,\dots,\gl_m)$, we set
$\bfgl{}^\circ=\gl_1\circ\dots\circ\gl_m$ and $\bfgl_{\widehat{1}}=(\gl_2,\dots,\gl_m)$.

\begin{thm} \label{thm:general}
Let $n\in\N$ and  $\bfs=(s_1,\dots,s_d)$
be a string of positive integers.
Set $\iota_\bfs =1$ if $s_d=1$ and
$\iota_\bfs =-1$ if $s_d>1$.
Suppose $\bfs$ uniquely determines
the triple of mollifiers $[\bfgl;\wbfgl;\wwbfgl]=[\gl_1,\dots,\gl_m;\wdl_1,\dots,\wdl_m;\wwdl_1,\dots,\wwdl_m]$
satisfying
\begin{equation}\label{equ:conditionWwgl}
\wwdl_1\boxplus\dots\boxplus\wwdl_j\in\{1, 2\} \quad \forall  j\ge 1,
\end{equation}
such that there is an expansion of the form
\begin{equation*}
H_n^{\star}[\bfs]=\iota_\bfs  \calH_n^\sharp[\bfgl;\wbfgl;\wwbfgl].
\end{equation*}
Then for any integers $a,b\ge 0$ and $c\ge 3$, we have
\begin{align} \label{equ:Rule2a}
H_n^{\star}[\{2\}^a,  \bfs]=&\,\iota_\bfs  \calH_{n}^\sharp[2a\oplus\lambda_1,\bfgl{}_{\widehat{1}};
a+\wdl_1,\wbfgl{}{}_{\widehat{1}}; \wwbfgl], \\
\label{equ:Rule2a1}
H_n^{\star}[\{2\}^a, 1, \bfs]=&\,\iota_\bfs  \calH_{n}^\sharp[2a+1,\bfgl;
a+1,\wbfgl; 2,\wwdl_1\boxplus -2,\wwbfgl{}_{\widehat{1}}], \\
\label{equ:Rule2bc}
H_n^{\star}[\{2\}^b, c, \bfs]=&\,\iota_\bfs \calH_{n}^\sharp[
\ol{2b+2},\{1\}^{c-3},\gl_1\oplus \bar{1},\bfgl_{\widehat{1}};
b+1,\{0\}^{c-3},\wbfgl{};
1,\{\theta\}^{c-3},\wwdl_1\boxplus -1,\wwbfgl{}_{\widehat{1}}].
\end{align}
Moreover, in all the index sets appearing on the right hand side above,
the third components still satisfy \eqref{equ:conditionWwgl}.
\end{thm}

Notice that condition \eqref{equ:conditionWwgl} essentially guarantees
that all the triples of mollifiers considered in the paper are admissible.

\begin{proof}
Set $\Pi(\bfs)= \{\bfgl^\circ;\, \wbfgl{}^\circ;\, \wwbfgl{}^\circ\}$.
The proof of the identities is by induction on $n+a$ or $n+b$.
When $n=1$ the theorem is clear.
Assume formulas  \eqref{equ:Rule2a} and  \eqref{equ:Rule2a1} are  true for all
$a+n\le N$ where $N\ge 2.$ Suppose now we have $n\ge 2$ and $n+a=N+1.$ Set

$\Pi_{\bt^a}=\{(2a\oplus\lambda_1)\circ\bfgl{}_{\widehat{1}}^\circ;\,
(a+\wdl_1)\circ\wbfgl{}{}_{\widehat{1}}^\circ;\, \wwbfgl{}^\circ\}$,

$\Pi_{\bt^a 1}=\{(2a+1)\circ\bfgl^\circ;\,
(a+1)\circ\wbfgl{}^\circ;\, 2\circ(\wwdl_1\boxplus -2)\circ(\wwbfgl{}_{\widehat{1}})^\circ\}$,

$\Pi_{\bt^bc}=\{(\ol{2b+2})\circ 1^{\circ(c-3)}\circ(\gl_1\oplus \bar{1})\circ\bfgl^\circ_{\widehat{1}}; \,
                 (b+1)\circ 0^{\circ(c-3)}\circ\wbfgl{}^\circ; \,
               1\circ\theta^{\circ(c-3)}\circ(\wwdl_1\boxplus -1)\circ(\wwbfgl{}_{\widehat{1}})^\circ\}.$

\noindent
We start proving the first identity. By definition, we have
\begin{equation*}
H_n^{\star}[\{2\}^a,\bfs]=\sum_{i=1}^{a}\frac{q^{n(a-i)}}{[n]^{2a-2i}}\,H_{n-1}^{\star}[\{2\}^i,\bfs]+\frac{q^{na}}{[n]^{2a}}\,H_n^{\star}[\bfs].
\end{equation*}
Applying induction assumption, we obtain
\begin{equation}\label{equ:indg}
\iota_\bfs H_n^{\star}[\{2\}^a,\bfs]=\sum_{i=1}^{a}\frac{q^{n(a-i)}}{[n]^{2a-2i}}
    \!\sum_{(\bfp; \wbfp; \wwbfp)\in\Pi_{\bt^i}}\!\calH_{n-1}[\bfp; \wbfp; \wwbfp]
    +\frac{q^{na}}{[n]^{2a}}\sum_{(\bfp; \wbfp; \wwbfp)\in\Pi(\bfs)}\!\calH_n[\bfp; \wbfp; \wwbfp].
\end{equation}
Expanding the inner sum
{\allowdisplaybreaks
\begin{align*}
&\sum_{(\bfp; \wbfp; \wwbfp)\in\Pi_{\bt^i}}\calH_{n-1}[\bfp; \wbfp; \wwbfp]=
    \sum_{(\bfp; \wbfp; \wwbfp)\in\Pi(\bfs)}\calH_{n-1}[2i\oplus p_1, \bfp_\hone; i+\wdp_1,\wbfp_\hone;  \wwbfp] \\
&\qquad=\sum_{(\bfp; \wbfp; \wwbfp)\in\Pi(\bfs)}
    \sum_{n>k_1>\dots>k_r\ge 1}\sqfr{n-1}{k_1}\frac{q^{ik_1}}{[k_1]^{2i}}
    \prod_{j=1}^r\frac{q^{\wdp_jk_j+Q(\wwdp_j,k_j)}(1+q^{k_j})}{\sgn(p_j)^{k_j}\,[k_j]^{|p_j|}}
\end{align*}}
and summing over $i$ in \eqref{equ:indg}, we obtain
{\allowdisplaybreaks
\begin{align*}
\iota_\bfs H_n^{\star}[\{2\}^a,\bfs]=\sum_{(\bfp; \wbfp; \wwbfp)\in\Pi(\bfs)}
    &\sum_{n\ge k_1>\dots>k_r\ge 1}\sqfr{n}{k_1}\left(\frac{q^{ak_1}}{[k_1]^{2a}}-\frac{q^{an}}{[n]^{2a}}\right)
    \prod_{j=1}^r\frac{q^{\wdp_jk_j+Q(\wwdp_j,k_j)}(1+q^{k_j})}{\sgn(p_j)^{k_j}\,[k_j]^{|p_j|}}\\[3pt]
&\,+\frac{q^{na}}{[n]^{2a}}\sum_{(\bfp; \wbfp; \wwbfp)\in\Pi(\bfs)}\!\calH_n[\bfp; \wbfp; \wwbfp],
\end{align*}}
which implies \eqref{equ:Rule2a} by definition and straightforward cancelation.
Similarly, for the second identity, we have by definition
\begin{equation*}
H_n^{\star}[\{2\}^a,1,\bfs]=\sum_{i=0}^{a}\frac{q^{n(a-i)}}{[n]^{2a-2i}}\,H_{n-1}^{\star}[\{2\}^i,1,\bfs]+\frac{q^{n(a+1)}}{[n]^{2a+1}}\,H_n^{\star}[\bfs].
\end{equation*}
Applying induction assumption, we obtain
\begin{equation}\label{indg}
\iota_\bfs H_n^{\star}[\{2\}^a,1,\bfs]=\sum_{i=0}^{a}\frac{q^{n(a-i)}}{[n]^{2a-2i}}
    \!\sum_{(\bfp; \wbfp; \wwbfp)\in\Pi_{\bt^i1}}\!\calH_{n-1}[\bfp; \wbfp; \wwbfp]
    +\frac{q^{n(a+1)}}{[n]^{2a+1}}\sum_{(\bfp; \wbfp; \wwbfp)\in\Pi(\bfs)}\!\calH_n[\bfp; \wbfp; \wwbfp].
\end{equation}
Setting $\Pi_0=\{\bfgl^\circ; \wbfgl{}^\circ; (\wwdl_1\boxplus -2)\circ(\wwbfgl{}_{\widehat{1}})^\circ\}$, we have
{\allowdisplaybreaks
\begin{align*}
&\sum_{(\bfp; \wbfp; \wwbfp)\in\Pi_{\bt^i1}}\calH_{n-1}[\bfp; \wbfp; \wwbfp]=
    \sum_{(\bfp; \wbfp; \wwbfp)\in\Pi_0}\calH_{n-1}[2i+1,\bfp; i+1,\wbfp; 2, \wwbfp] \\
&\qquad +\sum_{(\bfp; \wbfp; \wwbfp)\in\Pi_0}\calH_{n-1}[(2i+1)\oplus p_1,\bfp_\hone;
    i+1+\wdp_1,\wbfp_\hone; 2\boxplus\wwdp_1,\wwbfp_\hone]\\
&=\sum_{(\bfp; \wbfp; \wwbfp)\in\Pi_0}\!\!\left(
    \sum_{n> k_0>k_1>\dots>k_m\ge 1}\!\sqfr{n-1}{k_0}\frac{(1+q^{k_0})q^{(i+1)k_0+k_0(k_0-1)}}{[k_0]^{2i+1}}
    \prod_{j=1}^m\frac{q^{\wdp_jk_j+Q(\wwdp_j,k_j)}(1+q^{k_j})}{\sgn(p_j)^{k_j}\,[k_j]^{|p_j|}} \right. \\
&\qquad \left. +\sum_{n> k_1>\dots>k_m\ge 1}\!\sqfr{n-1}{k_1}
    \frac{(1+q^{k_1})q^{(i+1+\wdp_1)k_1+Q(2\boxplus \wwdp_1,k_1)}}{\sgn(p_1)^{k_1}[k_1]^{2i+1+|p_1|}}
    \prod_{j=2}^m\frac{q^{\wdp_jk_j+Q(\wwdp_j,k_j)}(1+q^{k_j})}{\sgn(p_j)^{k_j}\,[k_j]^{|p_j|}} \right).
\end{align*}}
Substituting the above expression  into \eqref{indg} and summing over $i$ by \eqref{equ:geomSum},
we obtain
{\allowdisplaybreaks
\begin{equation*}
\begin{split}
&\iota_\bfs  H_n^{\star}[\{2\}^a,1,\bfs]-\frac{q^{n(a+1)}}{[n]^{2a+1}}\sum_{(\bfp; \wbfp; \wwbfp)\in\Pi(\bfs)}\calH_{n}[\bfp; \wbfp; \wwbfp] \\
&=\sum_{(\bfp; \wbfp; \wwbfp)\in\Pi_0}\!\!\left(\sum_{n\ge k_0>k_1>\dots>k_m\ge 1}\!
    \sqfr{n}{k_0}\frac{(1+q^{k_0})q^{(a+1)k_0+k_0(k_0-1)}}{[k_0]^{2a+1}}
    \prod_{j=1}^m\frac{q^{\wdp_jk_j+Q(\wwdp_j,k_j)}(1+q^{k_j})}{\sgn(p_j)^{k_j}\,[k_j]^{|p_j|}} \right. \\
&\,-\frac{q^{n(a+1)}}{[n]^{2a+2}}\sum_{n\ge k_0>k_1>\dots>k_m\ge 1}\!\sqfr{n}{k_0}(1+q^{k_0})q^{k_0(k_0-1)}[k_0]
    \prod_{j=1}^m\frac{q^{\wdp_jk_j+Q(\wwdp_j,k_j)}(1+q^{k_j})}{\sgn(p_j)^{k_j}\,[k_j]^{|p_j|}} \\
&\,+\sum_{n\ge k_1>\dots>k_m\ge 1}\!\sqfr{n}{k_1}\frac{(1+q^{k_1})q^{(a+1+\wdp_1)k_1+Q(2\boxplus \wwdp_1,k_1)}}{\sgn(p_1)^{k_1} [k_1]^{2a+1+|p_1|}}
\prod_{j=2}^m\frac{q^{\wdp_jk_j+Q(\wwdp_j,k_j)}(1+q^{k_j})}{\sgn(p_j)^{k_j}\,[k_j]^{|p_j|}}\\
&\left.\,-\frac{q^{n(a+1)}}{[n]^{2a+2}}\sum_{n\ge k_1>\dots>k_m\ge 1}\!\sqfr{n}{k_1}\frac{(1+q^{k_1})q^{\wdp_1k_1+Q(2\boxplus \wwdp_1,k_1)}}{\sgn(p_1)^{k_1}[k_1]^{|p_1|-1}}
\prod_{j=2}^m\frac{q^{\wdp_jk_j+Q(\wwdp_j,k_j)}(1+q^{k_j})}{\sgn(p_j)^{k_j}\,[k_j]^{|p_j|}} \right).
\end{split}
\end{equation*}}
Noticing that the first and third sums on the right-hand side of the above add up to
\begin{equation*}
\sum_{(\bfp; \wbfp; \wwbfp)\in\Pi_{\bt^a1}}\calH_{n}[\bfp; \wbfp; \wwbfp],
\end{equation*}
we have
\begin{equation*}
\begin{split}
&\quad \iota_\bfs H_n^{\star}[\{2\}^a,1,\bfs]-\frac{q^{n(a+1)}}{[n]^{2a+1}}\sum_{(\bfp; \wbfp; \wwbfp)\in\Pi(\bfs)}\calH_{n}[\bfp; \wbfp; \wwbfp]=
\sum_{(\bfp; \wbfp; \wwbfp)\in\Pi_{\bt^a1}}\calH_{n}[\bfp; \wbfp; \wwbfp]
 \\[3pt]
&-\frac{q^{n(a+1)}}{[n]^{2a+2}}\!\sum_{(\bfp; \wbfp; \wwbfp)\in\Pi_0}
    \sum_{n\ge k_1>\dots>k_m\ge 1}\left(
    \prod_{j=1}^m\frac{q^{\wdp_jk_j+Q(\wwdp_j,k_j)}(1+q^{k_j})}{\sgn(p_j)^{k_j}\,[k_j]^{|p_j|}}\!
    \sum_{k_0=k_1+1}^n\!\sqfr{n}{k_0}\frac{(1+q^{k_0})q^{k_0(k_0-1)}}{[k_0]^{-1}}\right.\\[3pt]
&\qquad\qquad\left.+\,
    \sqfr{n}{k_1}\frac{(1+q^{k_1})q^{\wdp_1k_1+Q(2\boxplus \wwdp_1,k_1)}}{\sgn(p_1)^{k_1}[k_1]^{|p_1|-1}}
    \prod_{j=2}^m\frac{q^{\wdp_jk_j+Q(\wwdp_j,k_j)}(1+q^{k_j})}{\sgn(p_j)^{k_j}\,[k_j]^{|p_j|}}\right).
\end{split}
\end{equation*}
Summing the multiple sum in the above over $k_0$ by \eqref{equ:i2} and noticing that for
$(\bfp; \wbfp; \wwbfp)\in\Pi_0$, by \eqref{equ:conditionWwgl},
the first component  $\wwdp_1$ can take
only values $-1$ and $0$, we obtain with the help of \eqref{equ:2rk} that
\begin{equation*}
\iota_\bfs H_n^{\star}[\{2\}^a,1,\bfs]=\sum_{(\bfp; \wbfp; \wwbfp)\in\Pi_{\bt^a1}}\calH_{n}[\bfp; \wbfp; \wwbfp].
\end{equation*}
This proves identity \eqref{equ:Rule2a1} by induction.

Finally, to prove \eqref{equ:Rule2bc}, we proceed by induction on $n+b.$ Assume formula \eqref{equ:Rule2bc} is true for all $b+n\le N$.
Now suppose $b+n=N+1$. By definition, we have
\begin{equation*}
H_n^{\star}[\{2\}^b,c,\bfs]=\sum_{i=0}^{b}\frac{q^{n(b-i)}}{[n]^{2b-2i}}\,H_{n-1}^{\star}[\{2\}^i,c,\bfs]+\frac{q^{n(b+1)}}{[n]^{2b+c}}\,H_n^{\star}[\bfs].
\end{equation*}
By the  induction assumption, we see that
\begin{equation}\label{indg2}
\iota_\bfs H_n^{\star}[\{2\}^b,c,\bfs]=\sum_{i=0}^{b}\frac{q^{n(b-i)}}{[n]^{2b-2i}}\sum_{(\bfp; \wbfp; \wwbfp)\in\Pi_{\bt^ic}}\calH_{n-1}[\bfp; \wbfp; \wwbfp]+\frac{q^{n(b+1)}}{[n]^{2b+c}}\!\sum_{(\bfp; \wbfp; \wwbfp)\in\Pi(\bfs)}\!\calH_n[\bfp; \wbfp; \wwbfp].
\end{equation}
Setting $\Pi_1=\{0\circ 1^{\circ(c-3)}\circ (\gl_1\oplus \bar{1})\circ\bfgl^\circ_{\widehat{1}};
0^{\circ(c-2)}\circ\wbfgl{}^\circ;
\theta^{\circ(c-2)}\circ(\wwdl_1\boxplus -1)\circ(\wwbfgl{}_{\widehat{1}})^\circ\}$,
 we have
\begin{multline*}
 \sum_{(\bfp;\wbfp; \wwbfp)\in\Pi_{\bt^ic}}\calH_{n-1}[\bfp; \wbfp; \wwbfp]
=\sum_{(\bfp;\wbfp; \wwbfp)\in\Pi_1}\!\calH_{n-1}[\ol{2i+2}\oplus p_1, \bfp_\hone; i+1+\wdp_1, \wbfp_\hone; 1\boxplus \wwdp_1,\wwbfp_\hone] \\
 =\!\sum_{(\bfp;\wbfp; \wwbfp)\in\Pi_1}\sum_{n> k_1>\cdots>k_r\ge 1}\!\sqfr{n-1}{k_1}
 \frac{q^{k_1(i+1+\wdp_1)+Q(1\boxplus\wwdp_1,k_1)}}{(-\sgn(p_1))^{k_1}\,[k_1]^{2i+2+|p_1|}}
 \prod_{j=2}^r\frac{q^{\wdp_j k_j+Q(\wwdp_j,k_j)}(1+q^{k_j})}{\sgn(p_j)^{k_j}\,[k_j]^{|p_j|}}.
\end{multline*}
Plugging this into \eqref{indg2} and summing over $i$ by \eqref{equ:geomSum},
we obtain
\begin{multline*}
\iota_\bfs  H^\star_n[\{2\}^b,c,\bfs]=
\sum_{(\bfp;\wbfp; \wwbfp)\in\Pi_1}\calH_n[(\ol{2b+2})\oplus p_1, \bfp_\hone; b+1+\wdp_1, \wbfp_\hone; 1\boxplus \wwdp_1,\wwbfp_\hone] \\
+\frac{q^{n(b+1)}}{[n]^{2b+c}}\sum_{(\bfp;\wbfp; \wwbfp)\in\Pi(\bfs)}\calH_n[\bfp; \wbfp; \wwbfp] -
\frac{q^{n(b+1)}}{[n]^{2b+2}}\sum_{(\bfp;\wbfp; \wwbfp)\in\Pi_1}\calH_n[\ol{p}_1, \bfp_\hone; \wbfp; 1\boxplus \wwbfp_\hone],
\end{multline*}
which implies
\begin{equation} \label{equ:Subg}
\begin{split}
\iota_\bfs H^\star_n[\{2\}^b,c,\bfs]=\sum_{(\bfp;\wbfp; \wwbfp)\in\Pi_{\bt^bc}}\calH_n[\bfp; \wbfp; \wwbfp]
&-\frac{q^{n(b+1)}}{[n]^{2b+2}}\sum_{(\bfp;\wbfp; \wwbfp)\in\Pi_2}\calH_n[\bfp; \wbfp; \wwbfp] \\
&+\frac{q^{n(b+1)}}{[n]^{2b+c}}\sum_{(\bfp;\wbfp; \wwbfp)\in\Pi(\bfs)}\calH_n[\bfp; \wbfp; \wwbfp],
\end{split}
\end{equation}
where $\Pi_2=\{\ol{0}\circ 1^{\circ(c-3)}\circ(\gl_1\oplus \ol{1})\circ\bfgl_\hone^\circ; 0^{\circ(c-2)}\circ\wbfgl{}^\circ;
1\circ\theta^{\circ(c-3)}\circ(\wwdl_1\boxplus (-1))\circ(\wwbfgl{}_\hone)^\circ\}.$
Expanding the second sum from \eqref{equ:Subg}, we have
\begin{equation*}
  \sum_{(\bfp;\wbfp; \wwbfp)\in\Pi_2}\calH_n[\bfp; \wbfp; \wwbfp] =
\sum_{(\bfp;\wbfp; \wwbfp)\in\Pi_3}
\sum_{\substack{
\bfw=\ol{0}\circ 1^{\circ(c-3)}\circ(p_1\oplus \ol{1})\\
\wbfw=0^{\circ(c-2)}\circ \wdp_1; \,  \wwbfw=1\circ \theta^{\circ(c-3)}\circ \wwdp_1
}}
\calH_n[\bfw, \bfp_\hone; \widetilde{\bfw}, \wbfp_\hone; \wwbfw, \wwbfp_\hone],
\end{equation*}
where $\Pi_3=\{\bfgl^\circ; \wbfgl{}^\circ;
(\wwdl_1\boxplus(-1)) \circ(\wwbfgl{}_\hone)^\circ\}.$
Applying Lemma~\ref{lem:combinatorial} to the inner sum with $a=p_1,$
$b=\wdp_1$, $r=\wwdp_1$, $c$ replaced by $c-2$, and $\bfx=\bfp_\hone$,
$\bfy=\wbfp_\hone$, $\bfz=\wwbfp_\hone$, we obtain
\begin{equation} \label{23}
\begin{split}
\sum_{(\bfp;\wbfp; \wwbfp)\in\Pi_2}\calH_n[\bfp; \wbfp; \wwbfp]
&=\sum_{(\bfp;\wbfp;\wwbfp)\in\Pi_3}\frac{1}{[n]^{c-2}}\,
\calH_n[p_1, \bfp_\hone; \wbfp; \wwdp_1\boxplus 1, \wwbfp_\hone]\\
&=\frac{1}{[n]^{c-2}}\sum_{(\bfp;\wbfp; \wwbfp)\in\Pi(\bfs)}\calH_n[\bfp; \wbfp; \wwbfp].
\end{split}
\end{equation}
To justify the last equality above, we need to show that for the components of $\wwbfgl$ satisfying~\eqref{equ:conditionWwgl} we have
$\wwdl_1\boxplus(-1)\boxplus 1=\wwdl_1$ and for any $j\ge 2,$
\begin{equation*}
\wwdl_1\boxplus(-1)\boxplus\wwdl_2\boxplus\dots\boxplus\wwdl_j\boxplus 1=\wwdl_1\boxplus\wwdl_2\boxplus\dots\boxplus\wwdl_j.
\end{equation*}
These can be proved by using the projection $\pi$ of Lemma~\ref{lem:boxplus} and the fact that
$\pi(\wwdl_1\boxplus(-1)\boxplus\dots\boxplus\wwdl_j\boxplus 1)=\pi(\wwdl_1\boxplus\dots\boxplus\wwdl_j)\in\{1,2\}$ by \eqref{equ:conditionWwgl}.

Now by  \eqref{equ:Subg} and  \eqref{23}, we see that  \eqref{equ:Rule2bc}
is true when $n+b=N+1$.  We have completed
the proof of the theorem.
\end{proof}

Repeatedly applying the theorem, we quickly find
\begin{cor} \label{cor:general}
Keep the same notation as in Theorem~\ref{thm:general}.
Then for any integers $a,b\ge 0$, $l\ge 1$ and $c\ge 3$, we have
\begin{equation}\label{equ:Rule2a1l}
H_n^{\star}[\{2\}^a,\{1\}^l, \bfs]=\iota_\bfs  \calH_{n}^\sharp[
2a+1,\{1\}^{l-1}, \bfgl;
a+1,\{1\}^{l-1},\wbfgl;
2,\{0\}^{l-1},\wwdl_1\boxplus -2,\wwbfgl{}_{\widehat{1}}]
\end{equation}
and
\begin{multline*}
H_n^{\star}[\{2\}^b, c, \{2\}^a,\{1\}^l,\bfs]= \iota_\bfs  \calH_{n}^\sharp[
 \ol{2b+2},\{1\}^{c-3},\ol{2a+2},\{1\}^{l-1},\bfgl; \\
 b+1,\{0\}^{c-3},a+1,\{1\}^{l-1},\wbfgl;
  1,\{\theta\}^{c-3},1,\{0\}^{l-1},\wwdl_1\boxplus-2,\wwbfgl{}_{\widehat{1}}].
\end{multline*}
\end{cor}
\begin{proof}
Repeatedly applying \eqref{equ:Rule2a1} by attaching $(2^{a_j},1)$, $j=1,\dots,l$
and then setting
$a_1=\dots=a_{l-1}=0$ and $a_l=a$,  we can quickly verify the \eqref{equ:Rule2a1l}.
The corollary follows by applying \eqref{equ:Rule2bc} to \eqref{equ:Rule2a1l}.
\end{proof}

We may take limit $n\to\infty$ in \eqref{equ:Rule2a} of Theorem~\ref{thm:general}
and Corollary~\ref{cor:general} to obtain identities for $q$-MZSV.
\begin{thm} \label{thm:general-qMZSV}
Let $\bfs=(s_1,\dots,s_d)\in\N^d$.
Set $\iota_\bfs =1$ if $s_d=1$ and $\iota_\bfs =-1$ if $s_d>1$.
Suppose $\bfs$ uniquely determines $[\bfgl;\wbfgl;\wwbfgl]$
satisfying \eqref{equ:conditionWwgl}
such that $\zeta^{\star}[\bfs]=\iota_\bfs  \frakz^\sharp[\bfgl;\wbfgl;\wwbfgl].$
Then for any integers $a,b\ge 0$, $l\ge 1$ and $c\ge 3$, we have
\begin{align*}
\zeta^{\star}[\{2\}^a,\bfs]  =\iota_\bfs \frakz^\sharp[& 2a\oplus\lambda_1,\bfgl{}_{\widehat{1}};
a+\wdl_1,\wbfgl{}{}_{\widehat{1}}; \wwbfgl],\\
\zeta^{\star}[\{2\}^a,\{1\}^l,\bfs] =\iota_\bfs \frakz^\sharp[&
2a+1,\{1\}^{l-1}, \bfgl;
a+1,\{1\}^{l-1},\wbfgl;
2,\{0\}^{l-1},\wwdl_1\boxplus -2,\wwbfgl{}_{\widehat{1}}],\\
\zeta^{\star}[\{2\}^b, c,\bfs]=\iota_\bfs  \frakz^\sharp[&
\ol{2b+2},\{1\}^{c-3},\gl_1\oplus \bar{1},\bfgl_{\widehat{1}};
b+1,\{0\}^{c-3},\wbfgl{};
1,\{\theta\}^{c-3},\wwdl_1\boxplus -1,\wwbfgl{}_{\widehat{1}}],\\
\zeta^{\star}[c, \{1\}^l,\bfs]= \iota_\bfs \frakz^\sharp[&
 \ol{2},\{1\}^{c-3},\ol{2},\{1\}^{l-1},\bfgl;  1,\{0\}^{c-3},\{1\}^{l},\wbfgl;
 1, \{\theta\}^{c-3},1,\{0\}^{l-1},\wwdl_1\boxplus-2,\wwbfgl{}_{\widehat{1}}].
\end{align*}
\end{thm}
\begin{proof}
The first three equations are straight-forward. The last one can be
obtained by applying the middle two equations successively after setting $a=b=0$.
\end{proof}

By letting $q\to 1$ in Theorem \ref{thm:general-qMZSV}
we can immediately prove Theorem~\ref{thm:general-MZSV} which gives the
corresponding general rule for  classical MZSV. Of course,
to guarantee convergence we need to restrict $a\ge 1$ there.

{}From Theorem~\ref{thm:general-qMZSV}, we can obtain a general
formula for arbitrary $q$-MZSV.

\begin{thm} \label{thm:general-qGlanois}
Let  $a_0,a_j\in \N_0$, $c_j\in \N$ and $c_j\ne 2$ for all $j=1,\dots,d$.
Set $\gd(c)=1$ if $c=1$ and $\gd(c)=0$ if $c\ge 3$. Moreover, put $\{\alpha\}^{n}=\{\alpha\}^{\max(n,0)}$.
Then we have
\begin{equation*}
\begin{split}
\zeta^{\star}[\{2\}^{a_0},c_1,\{2\}^{a_1},\dots,c_d,\{2\}^{a_d}]
= \pm \frakz^\sharp[&B_0,\{1\}^{c_1-3},B_1,\dots,\{1\}^{c_d-3},B_d; \\
&\widetilde{B}_0,\{0\}^{c_1-3},\widetilde{B}_1,\dots,\{0\}^{c_d-3},\widetilde{B}_d; \\
&\wwB_0,\{\theta\}^{c_1-3},\wwB_1,\dots,\{\theta\}^{c_d-3},\wwB_d].
\end{split}
\end{equation*}
Here the leading sign $\pm$ is $+$ if and only if $a_d=0$ and $c_d=1$,
\begin{align*}
B_j=
\left\{
  \begin{array}{ll}
    A_j, & \hbox{if $A_j$ is odd;} \\
    \ol{A_j}, & \hbox{if $A_j$ is even,}
     \end{array}
\right.
\quad \text{where} \quad
A_j=
\left\{
  \begin{array}{ll}
    2a_0+2-\gd(c_1) , & \hbox{if $j=0$;} \\
    2a_d+1-\gd(c_d), & \hbox{if $j=d$;} \\
    2a_j+3-\gd(c_j)-\gd(c_{j+1}), & \hbox{if $0<j<d$,}
  \end{array}
\right.
\end{align*}
\begin{align*}
\widetilde{B}_j=
\left\{
  \begin{array}{ll}
    a_j+1, & \hbox{if $0\le j<d$;} \\
    a_d, & \hbox{if $j=d$,}
     \end{array}
\right.
\, \text{and} \quad
\wwB_j=
\left\{
  \begin{array}{ll}
    1+\gd(c_1) , & \hbox{if $j=0$;} \\
    (1-\gd(c_d))\boxplus(-1), & \hbox{if $j=d$;} \\
   (1-\gd(c_j))\boxplus(\gd(c_{j+1})-1) , & \hbox{if $0<j<d$.}
  \end{array}
\right.
\end{align*}
Moreover, if $a_d=0$ and $c_d=1$, then $B_d$, $\widetilde{B}_d$, $\wwB_d$ are vacuous.
\end{thm}
\begin{proof}
The theorem can be proved easily by  induction on $d$ using
Theorem \ref{thm:general-qMZSV}. We leave the details to the interested reader.
\end{proof}
By letting $q\to 1$ in Theorem~\ref{thm:general-qGlanois}, we get Theorem~\ref{thm:general-Glanois} which gives the corresponding result
for classical MZSV. It is clear that to ensure  convergence we  need to assume that $a_0>0$ or $c_1\ge 3$.

\section{Some applications}
The first application gives us the general  $\oll{\bt\hyf c}\hyf\bt$ ($c\ge 3$) formula.
Here the underline means
the $(\{2\}^a,c)$-type string may be repeated an arbitrary number of times
where $a$ and $c$ may change in each repetition.
\begin{thm} \label{thm:MHS2c2}
Suppose $\ell\in \N_0$.
Let $\bfs=(\{2\}^{a_1},c_1,\dots,\{2\}^{a_\ell},c_\ell,\{2\}^{a_{\ell+1}})$
with $a_j, c_j\in\N_0$ and $c_j\ge 3$ for all $j\ge 1.$
Then
\begin{equation}\label{equ:GenMain2c2}
\begin{split}
 H^\star_n[\bfs]=- \calH_n^\sharp[
 &\ol{2a_1+2},\{1\}^{c_1-3},2a_2+3,\{1\}^{c_2-3},\dots,2a_\ell+3,\{1\}^{c_\ell-3},2a_{\ell+1}+1;\\
 & a_1+1,\{0\}^{c_1-3},\dots,a_\ell+1,\{0\}^{c_\ell-3},a_{\ell+1};\,
  1,\{\theta\}^{c_1+\cdots+c_\ell-2\ell}].
\end{split}
\end{equation}
\end{thm}
\begin{proof} If $a_{\ell+1}=0$, then starting from   Theorem~\ref{thm:MHS2c} for $H^\star_n[\{2\}^{a_\ell},c_\ell]$ and repeatedly applying
 \eqref{equ:Rule2bc}, we get the above identity. Otherwise,
starting from \eqref{equ:H2a} and repeatedly applying the attaching rule \eqref{equ:Rule2bc}
we can arrive at \eqref{equ:GenMain2c2} immediately.
\end{proof}

By applying Lemma~\ref{lem:last} to Theorem~\ref{thm:MHS2c2} we immediately get
\begin{cor} \label{thm:MZSV2c2}
With the same notation as in Theorem~\ref{thm:MHS2c2}, we have
\begin{equation*}
\begin{split}
 \zeta^\star[\bfs]=- \zeta^\sharp[
 &\ol{2a_1+2},\{1\}^{c_1-3},2a_2+3,\{1\}^{c_2-3},\dots,2a_\ell+3,\{1\}^{c_\ell-3},2a_{\ell+1}+1;\\
 & a_1+1,\{0\}^{c_1-3},\dots,a_\ell+1,\{0\}^{c_\ell-3},a_{\ell+1};\,
  1,\{\theta\}^{c_1+\cdots+c_\ell-2\ell}].
\end{split}
\end{equation*}
In particular, if $c_1=c_2=\ldots=c_\ell=3$, we get a $q$-analog of the Two-three formula:
\begin{equation*}
\zeta^\star[\bfs]
= \hskip-.3cm
 \sum_{\substack{
       \bfp=({2a_1+2})\circ(2a_2+3)\circ\cdots\circ(2a_\ell+3)\circ(2a_{\ell+1}+1) \\
       \widetilde{\bfp}=(a_1+1)\circ\cdots\circ(a_\ell+1)\circ(a_{\ell+1})
       } }
\ \sum_{k_1> \cdots >k_m\ge 1}\!\!(-1)^{k_1-1} q^{\frac{k_1(k_1-1)}{2}} \prod_{j=1}^m \frac{q^{\wdp_jk_j}(1+q^{k_j})}{[k_j]^{p_j}}.
\end{equation*}
\end{cor}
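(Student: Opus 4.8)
The plan is to let $n\to\infty$ in Theorem~\ref{thm:MHS2c2}. Since $0<q<1$, every factor in the defining series of $\zeta^\star[\bfs]$ carries a geometric weight $q^{k_j}$, so the series converges and $\lim_{n\to\infty}H^\star_n[\bfs]=\zeta^\star[\bfs]$. As $\Pi(\bfs)$ is a finite set, the whole corollary reduces to the single termwise limit
\begin{equation*}
\lim_{n\to\infty}\calH_n[\bfp;\wbfp;\wwbfp]=\frakz[\bfp;\wbfp;\wwbfp]\qquad\text{for each }(\bfp;\wbfp;\wwbfp)\in\Pi(\bfs).
\end{equation*}
Granting this, summing over $\Pi(\bfs)$ and keeping the global sign $-1$ from \eqref{equ:GenMain2c2} gives the first assertion.

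To prove the termwise limit I would isolate the outermost index $k_1$. The two sides differ only by the weight $\sqfr{n}{k_1}$ attached to $k_1$ and by the truncation $k_1\le n$, so writing the common $k_1$-slice as $a_{k_1}S(k_1)$, where $S(k_1)$ is the inner sum over $k_1>k_2>\dots>k_m\ge1$ and $a_{k_1}=\sgn(p_1)^{k_1}q^{\wdp_1k_1+Q(\wwdp_1,k_1)}(1+q^{k_1})/[k_1]^{|p_1|}$, we have
\begin{equation*}
\frakz[\bfp;\wbfp;\wwbfp]-\calH_n[\bfp;\wbfp;\wwbfp]
=\sum_{k_1=1}^{n}\Bigl(1-\sqfr{n}{k_1}\Bigr)a_{k_1}S(k_1)+\sum_{k_1>n}a_{k_1}S(k_1).
\end{equation*}
The second sum is the tail of the convergent series $\frakz[\bfp;\wbfp;\wwbfp]$ and vanishes as $n\to\infty$. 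For the first, note that in every triple of $\Pi(\bfs)$ the leading entry of the third string is $1\boxplus\theta\boxplus\dots\boxplus\theta=1$, so $\wwdp_1=1$ and $Q(\wwdp_1,k_1)=k_1(k_1-1)/2=\tfrac12k_1^2-\tfrac12k_1$, while $\wwdp_j=\theta$ and $Q(\wwdp_j,k_j)=0$ for $j\ge2$. Thus $a_{k_1}S(k_1)=q^{k_1^2/2}R_{k_1}$ with $R_{k_1}=q^{-k_1/2}\sgn(p_1)^{k_1}q^{\wdp_1k_1}(1+q^{k_1})[k_1]^{-|p_1|}S(k_1)$, and I would apply Lemma~\ref{lem:last} with $c=1/2$.

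The step I expect to be the main obstacle is the verification of the growth hypothesis $|R_{k_1}|<k_1^{c_1}q^{c_2k_1}$. The point is that $S(k_1)$ need not be bounded: the $j$-th inner factor decays geometrically when $\wdp_j\ge1$ but is only bounded when $\wdp_j=0$ (which happens exactly for a block of positions where $\bfp$ consists of ones and $\wbfp$ of zeros, so that $|p_j|\ge1$ and $Q(\wwdp_j,\cdot)=0$). Since there are at most $O(k_1^{m-1})$ nested tuples $k_1>k_2>\dots>k_m\ge1$ and each summand is bounded, one still gets a polynomial bound $|S(k_1)|\le C\,k_1^{m-1}$. Combined with $[k_1]\ge1$, $1+q^{k_1}\le2$, and $\wdp_1\ge a_1+1\ge1$, this yields $|R_{k_1}|\le 2C\,k_1^{m-1}q^{(\wdp_1-1/2)k_1}$ with $\wdp_1-\tfrac12\ge\tfrac12>0$, which is of the form required by Lemma~\ref{lem:last}. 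The lemma then forces the defect sum to $0$, so $\calH_n[\bfp;\wbfp;\wwbfp]\to\frakz[\bfp;\wbfp;\wwbfp]$, proving the first assertion. It is precisely the polynomial factor $k_1^{c_1}$ in Lemma~\ref{lem:last}, rather than a uniform bound, that is needed here.

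Finally, for the case $c_1=\dots=c_\ell=3$ I would substitute $c_j-3=0$ and $c_j-2=1$ into $\Pi(\bfs)$: no $1^{\circ(c_j-3)}$ fillers remain and the third string becomes $1\circ\theta^{\circ\ell}$, so by the computation above the only surviving quadratic weight is $q^{Q(1,k_1)}=q^{k_1(k_1-1)/2}$ and $Q(\wwdp_j,k_j)=0$ for $j\ge2$. The leading entry of $\bfp$ is a $\oplus$-sum beginning with the barred $\ol{2a_1+2}$, hence barred, so $\sgn(p_1)^{k_1}=(-1)^{k_1}$; every later entry is a $\oplus$-sum of unbarred numbers and is therefore positive, giving $\sgn(p_j)^{k_j}=1$. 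Combining $(-1)^{k_1}$ with the global $-1$ produces the factor $(-1)^{k_1-1}$ and leaves $[k_1]^{-p_1}$ with the unbarred $p_1=2a_1+2$, which is exactly the displayed explicit expansion. This completes the proof.
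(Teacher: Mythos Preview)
Your proof is correct and follows exactly the route the paper takes: apply Lemma~\ref{lem:last} to the identity of Theorem~\ref{thm:MHS2c2} and let $n\to\infty$. The paper records this in a single sentence, whereas you have carefully verified the hypotheses of Lemma~\ref{lem:last} (notably that $\wwdp_1=1$ forces the quadratic weight $q^{k_1(k_1-1)/2}$, and that the inner sum $S(k_1)$ is only polynomially large), and you have unpacked the special case $c_1=\dots=c_\ell=3$ by tracking the bar on $p_1$ through the $\oplus$-operations; all of this is right and matches what the paper intends.
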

\begin{rem}
When $q\to 1$ one can recover all the MZSV identities
contained in \cite{LinebargerZh2013}.
\end{rem}

Now starting from \eqref{equ:H2a1} and repeatedly and alternatively applying the
attaching rules \eqref{equ:Rule2a1} and \eqref{equ:Rule2bc} we can find the following:
\begin{thm}\label{thm:MHS2c21&212c21}
Suppose $\ell\in\N_0$, $n\in \N$ and  $a_0, a_j, b_j, c_j-3\in\N_0$ for all $j\ge 1$.
Consider the following two possible types of compositions:
{\allowdisplaybreaks
\begin{align*}
& \!\!\!\!\!\!\!\!\!\!\!\!\!(\oll{\bt\hyf c\hyf\bt\hyf1}): \\
  \bfs&=(\{2\}^{b_1},c_1,\{2\}^{a_1},1,\dots,\{2\}^{b_\ell},c_\ell,\{2\}^{a_\ell},1), \,\,\, \ell\in\N, \\
  \bfs'&=(\ol{2b_1+2},\{1\}^{c_1-3},\ol{2a_1+2},\dots,
                            \ol{2b_\ell+2},\{1\}^{c_\ell-3},\ol{2a_\ell+2}; \\
  &  \qquad b_1+1,\{0\}^{c_1-3},a_1+1,\dots,b_\ell+1,\{0\}^{c_\ell-3},a_\ell+1; \\
  &  \qquad 1,\{\theta\}^{c_1-3},1, \underbrace{-1,\{\theta\}^{c_2-3}, 1
                            ,\dots,-1,\{\theta\}^{c_\ell-3},1}_{\text{appear only if $\ell>1$}}).\\
&\!\!\!\!\!\!\!\!\!\!\!\!\! (\bt\hyf1\hyf\oll{\bt\hyf c\hyf\bt\hyf1}): \\
 \bfs&=(\{2\}^{a_0},1,\{2\}^{b_1},c_1,\{2\}^{a_1},1,\ldots,\{2\}^{b_\ell},c_\ell,\{2\}^{a_{\ell}},1), \,\,\, \ell\in\N_0, \\
   \bfs'&=(2a_0+1, \ol{2b_1+2},\{1\}^{c_1-3},\ol{2a_1+2},\dots,
                         \ol{2b_\ell+2},\{1\}^{c_\ell-3},\ol{2a_\ell+2}; \\
 & \qquad    a_0+1,b_1+1,\{0\}^{c_1-3},a_1+1,\dots,b_\ell+1,\{0\}^{c_\ell-3},a_\ell+1; \\
  &   \qquad   2,\underbrace{-1,\{\theta\}^{c_1-3},1, \dots,-1,\{\theta\}^{c_\ell-3},1}_{\text{appear only if $\ell>0$}}).
\end{align*}
}
Then in each case we have
\begin{equation*}
 H_n^{\star}[\bfs]=\calH_n^\sharp[\bfs'].
\end{equation*}
\end{thm}

\begin{cor}\label{cor:MZSV2c21&212c21}
With the same notation as in Theorem~\ref{thm:MHS2c21&212c21}, we have
\begin{equation*}
 \zeta^{\star}[\bfs]= \frakz^\sharp[\bfs'].
\end{equation*}
\end{cor}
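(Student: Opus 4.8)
The plan is to pass to the limit $n\to\infty$ in the finite $q$-MHS identity of Theorem~\ref{thm:MHS2c21&212c21}, in exactly the manner in which Corollary~\ref{thm:MZSV2c2} was deduced from Theorem~\ref{thm:MHS2c2} and Theorem~\ref{thm:general-qMZSV} from Corollary~\ref{cor:general}. On the left-hand side one has $\lim_{n\to\infty}H_n^{\star}[\bfs]=\zeta^{\star}[\bfs]$ straight from the definitions, the $q$-MZSV series being convergent because every summand carries a factor $q^{k_j}$ for each index $k_j$, giving geometric decay in the outermost variable. Since $\Pi(\bfs)$ is a finite set, it then suffices to prove that
\begin{equation*}
\lim_{n\to\infty}\calH_n[\bfp;\wbfp;\wwbfp]=\frakz[\bfp;\wbfp;\wwbfp]
\end{equation*}
for every resolved triple $(\bfp;\wbfp;\wwbfp)\in\Pi(\bfs)$.

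First I would isolate the single point of difference between $\calH_n$ and $\frakz$, namely the weight $\sqfr{n}{k_1}$ attached to the largest index. Writing $F_{k_1}$ for the inner sum over $k_2,\dots$ at fixed $k_1$, so that $\frakz[\bfp;\wbfp;\wwbfp]=\sum_{k_1\ge1}F_{k_1}$ and $\calH_n[\bfp;\wbfp;\wwbfp]=\sum_{k_1=1}^{n}\sqfr{n}{k_1}F_{k_1}$, I would split
\begin{equation*}
\frakz[\bfp;\wbfp;\wwbfp]-\calH_n[\bfp;\wbfp;\wwbfp]=\sum_{k_1=1}^{n}\Big(1-\sqfr{n}{k_1}\Big)F_{k_1}+\sum_{k_1>n}F_{k_1}.
\end{equation*}
The tail $\sum_{k_1>n}F_{k_1}$ vanishes as $n\to\infty$ because the series for $\frakz$ converges, the triples being admissible (condition~\eqref{equ:conditionWwgl} is inherited here from the repeated applications of Theorem~\ref{thm:general} that build $\Pi(\bfs)$). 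For the remaining finite sum I would extract the leading quadratic factor: the first entry $\wwdp_1$ of the resolved mollifier $\wwbfp$ is a partial $\boxplus$-sum of the leading entries of the third $\circ$-string, hence lies in $\{1,2\}$ by~\eqref{equ:conditionWwgl}, so the summand contains $q^{Q(\wwdp_1,k_1)}=q^{c\,k_1^2-c\,k_1}$ with $c=\wwdp_1/2>0$. Setting $F_{k_1}=q^{c\,k_1^2}R_{k_1}$ and invoking Lemma~\ref{lem:last} then forces $\sum_{k_1=1}^{n}(1-\sqfr{n}{k_1})F_{k_1}\to0$, which completes the argument.

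The step needing genuine care is the verification of the hypothesis $|R_{k_1}|<k_1^{c_1}q^{c_2k_1}$ of Lemma~\ref{lem:last}. Here $R_{k_1}$ packages the remaining $k_1$-dependent factor together with the inner sum over $k_2>\cdots$; because some resolved mollifiers $\wwdp_j$ with $j\ge2$ may be nonpositive, the individual factors $q^{Q(\wwdp_j,k_j)}$ can themselves grow with $k_j$, and one must check that the aggregate of the at most polynomially many such terms still grows at most exponentially in $k_1$. This bounded-growth information is precisely what the admissibility condition~\eqref{equ:conditionWwgl} encodes, so once that bound is recorded the conclusion is immediate and uniform across both composition types, yielding $\zeta^{\star}[\bfs]=\sum_{(\bfp;\wbfp;\wwbfp)\in\Pi(\bfs)}\frakz[\bfp;\wbfp;\wwbfp]$.
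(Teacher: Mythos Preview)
Your approach is exactly the paper's: the corollary is stated without proof, being the immediate consequence of Theorem~\ref{thm:MHS2c21&212c21} obtained by letting $n\to\infty$ via Lemma~\ref{lem:last}, just as Corollary~\ref{thm:MZSV2c2} came from Theorem~\ref{thm:MHS2c2}.

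There is, however, one technical slip in your execution. The choice $c=\wwdp_1/2$ does not always satisfy the hypothesis of Lemma~\ref{lem:last}. In the second composition type the third $\circ$-string begins $2\circ(-1)\circ\cdots$, so whenever the first $\circ$ resolves to a comma one gets $\wwdp_1=2$, and the next resolved entry may be $\wwdp_2=-1$. Then the inner sum over $k_2<k_1$ carries a factor $q^{Q(-1,k_2)}=q^{-k_2(k_2+1)/2}$, and its dominant term (at $k_2=k_1-1$) is of order $q^{-(k_1-1)^2/2}$; with $c=1$ this makes $|R_{k_1}|$ grow like $q^{-k_1^2/2}$, violating the bound $|R_{k_1}|<k_1^{c_1}q^{c_2k_1}$. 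The remedy is simply to take $c=1/2$ uniformly rather than $c=\wwdp_1/2$. An Abel summation on the quadratic exponents $\sum_j \pi(\wwdp_j)k_j(k_j-1)/2$, using that all partial sums $\pi(\wwdp_1)\boxplus\cdots\boxplus\pi(\wwdp_j)$ lie in $\{1,2\}$ by~\eqref{equ:conditionWwgl}, gives $\sum_j Q(\wwdp_j,k_j)\ge k_1^2/2-Ck_1$ for every resolved triple, whence $|F_{k_1}|\le D\,k_1^{m-1}q^{k_1^2/2-Ck_1}$ and Lemma~\ref{lem:last} applies with $c=1/2$.
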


For example, taking $\ell=1$ and $c_1=3$, we get
(cf.\ \cite[(26)]{Zhao2013a} and the identity after it)
\begin{equation*}
\zeta^{\star}[\{2\}^b,3,\{2\}^a,1]=\frakz[2a+2b+4; a+b+2; 2]+\frakz[\ol{2b+2}, \ol{2a+2}; b+1, a+1; 1, 1]
\end{equation*}
and
\begin{equation}\label{equ:212321}
\begin{split}
\zeta^{\star}[\{2\}^{a_0}, 1, &\{2\}^b,3,\{2\}^{a_1},1]=\frakz[2(a_0+b+a_1)+5; a_0+b+a_1+3; 2] \\
&+\frakz[2a_0+1, 2a_1+2b+4; a_0+1, a_1+b+2; 2,0] \\
&+\frakz[\ol{2a_0+2b+3}, \ol{2a_1+2}; a_0+b+2, a_1+1; 1, 1] \\
&+\frakz[2a_0+1, \ol{2b+2}, \ol{2a_1+2}; a_0+1, b+1, a_1+1; 2, -1, 1].
\end{split}
\end{equation}
We can also get the following identity which is the
$q$-analog of \cite[Theorem~6.1(i)]{Zhao2013a}.
\begin{cor}
Let $a, b$ be two nonnegative integers. Then
\begin{equation*}
\begin{split}
\zeta^{\star}[\{2\}^a,3,\{2\}^b,1]+\zeta^{\star}[\{2\}^b,3,\{2\}^a,1]&=\zeta^{\star}[\{2\}^{a+1}]\zeta^{\star}[\{2\}^{b+1}] \\
&\quad +(1-q)\frakz[2a+2b+3; a+b+2; 2].
\end{split}
\end{equation*}
\end{cor}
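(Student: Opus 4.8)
The plan is to reduce both multiple zeta star values on the left to their mollified-companion expansions, expand the product on the right by a stuffle decomposition, and match terms. First I would specialize the $(\oll{\bt\hyf c\hyf\bt\hyf 1})$ case of Corollary~\ref{cor:MZSV2c21&212c21} at $\ell=1$, $c_1=3$, i.e.\ invoke the displayed identity just above it,
\[
\zeta^{\star}[\{2\}^b,3,\{2\}^a,1]=\frakz[2a+2b+4; a+b+2; 2]+\frakz[\ol{2b+2}, \ol{2a+2}; b+1, a+1; 1, 1],
\]
together with its $a\leftrightarrow b$ swap. Adding the two, the first (symmetric) term doubles, leaving $2\frakz[2a+2b+4;a+b+2;2]$ plus the two length-two companions $\frakz[\ol{2a+2}, \ol{2b+2}; a+1, b+1; 1, 1]$ and $\frakz[\ol{2b+2}, \ol{2a+2}; b+1, a+1; 1, 1]$.

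Next I would treat the product. From \eqref{equ:H2a}, letting $n\to\infty$, one has $\zeta^{\star}[\{2\}^{a+1}]=-\frakz[\ol{2a+2}; a+1; 1]$ and likewise for $b$, so the two leading minus signs multiply to $+1$. Writing each factor as a single series $\sum_k u_k$, $\sum_j v_j$ and splitting the double sum over the two off-diagonal chambers $k_1>k_2$, $k_1<k_2$ and the diagonal $k_1=k_2$, the off-diagonal parts reassemble exactly into the two length-two companions above: in each chamber the sign $(-1)^{k_1}(-1)^{k_2}$ matches $\sgn(\ol{2a+2})^{k_1}\sgn(\ol{2b+2})^{k_2}$, and the power shifts $Q(1,k_i)=k_i(k_i-1)/2$ coincide with those of the companions. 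The diagonal, where the signs give $(-1)^k(-1)^k=1$ and $2Q(1,k)=Q(2,k)=k(k-1)$, yields the single series
\[
D:=\sum_{k\ge1}\frac{q^{(a+b+2)k+k(k-1)}(1+q^{k})^2}{[k]^{2a+2b+4}}.
\]
Hence subtracting $\zeta^{\star}[\{2\}^{a+1}]\zeta^{\star}[\{2\}^{b+1}]$ cancels both length-two companions and leaves $2\frakz[2a+2b+4;a+b+2;2]-D$.

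Finally I would verify the scalar identity $2\frakz[2a+2b+4;a+b+2;2]-D=(1-q)\frakz[2a+2b+3;a+b+2;2]$ termwise. Over the common single series the numerator collapses to $(1+q^{k})\bigl(2-(1+q^{k})\bigr)=(1+q^{k})(1-q^{k})=1-q^{2k}$; then the elementary $q$-identity $1-q^{k}=(1-q)[k]$ lowers the exponent of $[k]$ by one and pulls out $(1-q)$, producing precisely $(1-q)\frakz[2a+2b+3;a+b+2;2]$ and completing the proof.

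The only delicate point is the bookkeeping in the stuffle expansion: one must check that the signs, the $Q(1,k)$ power shifts, and the mollifier labels $\ol{2a+2},\ol{2b+2}$ line up so that the two off-diagonal chambers reproduce exactly the two companions coming from the additive MZSV expansions, and that the diagonal contributes the doubled $(1+q^k)$ factor. Once that matching is confirmed, the rest is the short computation above, so I expect the term-tracking in the stuffle, rather than any genuine estimate, to be the main (and fairly mild) obstacle.
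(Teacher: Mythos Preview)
Your proposal is correct and follows essentially the same route as the paper: both use the length-two expansion $\zeta^{\star}[\{2\}^b,3,\{2\}^a,1]=\frakz[2a+2b+4; a+b+2; 2]+\frakz[\ol{2b+2}, \ol{2a+2}; b+1, a+1; 1, 1]$ and its swap, expand the product $\zeta^{\star}[\{2\}^{a+1}]\zeta^{\star}[\{2\}^{b+1}]$ via stuffle, cancel the off-diagonal length-two pieces against the two companions, and reduce the remaining single-sum difference $2(1+q^k)-(1+q^k)^2=(1+q^k)(1-q^k)$ to $(1-q)\frakz[2a+2b+3;a+b+2;2]$. Your write-up is in fact more explicit than the paper's, which jumps directly to the single-sum identity without spelling out the stuffle cancellation; your use of $\zeta^{\star}[\{2\}^{a+1}]=-\frakz[\ol{2a+2};a+1;1]$ (with the bar and sign coming straight from \eqref{equ:H2a}) is the cleaner bookkeeping choice, since it makes the match with the barred length-two companions transparent.
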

\begin{proof} By taking $n\to\infty$ in \eqref{equ:H2a} and using Lemma~\ref{lem:last} we get
\begin{equation*}
\zeta^{\star}[\{2\}^{a+1}]=\frakz[2a+2; a+1; 1]=\sum_{k=1}^\infty \frac{q^{(a+1)k+Q(1,k)}(1+q^k)}{[k]^{2a+2}}.
\end{equation*}
Thus
\begin{align*}
&\zeta^{\star}[\{2\}^a,3,\{2\}^b,1]+\zeta^{\star}[\{2\}^b,3,\{2\}^a,1]
-\zeta^{\star}[\{2\}^{a+1}]\zeta^{\star}[\{2\}^{b+1}]\\
=&\sum_{k=1}^\infty \frac{q^{(a+b+2)k+Q(2,k)}\big(2(1+q^k)-(1+q^k)^2\big)}{[k]^{2a+2b+4}}\\
=&\sum_{k=1}^\infty \frac{q^{(a+b+2)k+Q(2,k)}(1+q^k)(1-q^k)}{[k]^{2a+2b+4}}\\
=&(1-q)\frakz[2a+2b+3; a+b+2; 2]
\end{align*}
as desired.
\end{proof}

If we start with \eqref{equ:H2a} and repeatedly and alternatively apply the
attaching rules \eqref{equ:Rule2a1} and \eqref{equ:Rule2bc} we can get:
\begin{thm}\label{thm:MHS2c212&212c212}
Suppose $\ell\in\N_0$, $n, a_{\ell+1}\in \N$, and  $a_0, a_j, b_j, c_j-3\in\N_0$ for all $1\le j\le \ell$.
Consider the following two possible types of compositions:
%\item[$:$]
{\allowdisplaybreaks
\begin{align*}
& \!\!\!\!\!\!\!\!\!\!\!\!\!(\oll{\bt\hyf c\hyf\bt\hyf1}\hyf\bt): \\
  \bfs&=(\{2\}^{b_1},c_1,\{2\}^{a_1},1,\dots,\{2\}^{b_\ell},c_\ell,\{2\}^{a_\ell},1,\{2\}^{a_{\ell+1}}), \,\,\, \ell\in\N, \\
  \bfs'&=(\ol{2b_1+2},\{1\}^{c_1-3},\ol{2a_1+2},\dots,
              \ol{2b_\ell+2}, \{1\}^{c_\ell-3},\ol{2a_\ell+2},\ol{2a_{\ell+1}}; \\
  &   \qquad b_1+1,\{0\}^{c_1-3},a_1+1,\dots,b_\ell+1,\{0\}^{c_\ell-3},a_\ell+1,a_{\ell+1}; \\
  &  \qquad  1,\{\theta\}^{c_1-3},1, \underbrace{-1,\{\theta\}^{c_2-3},1,\dots,-1,\{\theta\}^{c_\ell-3}, 1}_{\text{appear only if $\ell>1$}},-1). \\
&\!\!\!\!\!\!\!\!\!\!\!\!\! (\bt\hyf1\hyf\oll{\bt\hyf c\hyf\bt\hyf1}\hyf\bt): \\
 \bfs&=(\{2\}^{a_0},1,\{2\}^{b_1},c_1,\{2\}^{a_1},1,\ldots,\{2\}^{b_\ell},c_\ell,
        \{2\}^{a_{\ell}},1,\{2\}^{a_{\ell+1}}), \,\,\, \ell\in\N_0, \\
   \bfs'&=(2a_0+1,\ol{2b_1+2},\{1\}^{c_1-3},\ol{2a_1+2},\cdots ,                            \ol{2b_\ell+2},\{1\}^{c_\ell-3},\ol{2a_\ell+2},\ol{2a_{\ell+1}}; \\
 & \qquad  a_0+1,b_1+1,\{0\}^{c_1-3},a_1+1,\dots,b_\ell+1,\{0\}^{c_\ell-3},a_\ell+1,a_{\ell+1}; \\
 & \qquad 2,\underbrace{-1,\{\theta\}^{c_1-3},1,\dots,-1,\{\theta\}^{c_\ell-3}, 1}_{\text{appear only if $\ell>0$}},-1).
\end{align*}
}
Then in each case we have
\begin{equation*}
 H_n^{\star}[\bfs]=-\calH_n^\sharp[\bfs'].
\end{equation*}
\end{thm}
By taking $n\to \infty$ we have
\begin{cor}\label{thm:MZSV2c212&212c212}
Let notation be the same as in Theorem~\ref{thm:MHS2c212&212c212}. Then
\begin{equation*}
 \zeta^{\star}[\bfs]=-\frakz^\sharp[\bfs'].
\end{equation*}
\end{cor}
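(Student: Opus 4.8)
The plan is to let $n\to\infty$ in the finite $q$-MHS identity furnished by Theorem~\ref{thm:MHS2c212&212c212}, exactly as in the passage from Theorem~\ref{thm:MHS2c2} to Corollary~\ref{thm:MZSV2c2}. On the left we have $\lim_{n\to\infty}H_n^{\star}[\bfs]=\zeta^{\star}[\bfs]$ straight from the definitions, so it suffices to prove that for each triple $(\bfp;\wbfp;\wwbfp)\in\Pi(\bfs)$ one has $\lim_{n\to\infty}\calH_n[\bfp;\wbfp;\wwbfp]=\frakz[\bfp;\wbfp;\wwbfp]$; summing these limits over the finite set $\Pi(\bfs)$ then yields the corollary.

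To establish the termwise convergence, write $m=\ell(\bfp)$ and set
$$T(k_1,\dots,k_m)=\prod_{j=1}^m\frac{q^{\wdp_jk_j+Q(\wwdp_j,k_j)}(1+q^{k_j})}{\sgn(p_j)^{k_j}[k_j]^{|p_j|}},$$
which is the common summand of $\calH_n$ and $\frakz$, the only discrepancy being the factor $\sqfr{n}{k_1}$ on the outermost index in $\calH_n$ (together with the truncation $k_1\le n$). Then
\begin{equation*}
\frakz[\bfp;\wbfp;\wwbfp]-\calH_n[\bfp;\wbfp;\wwbfp]
=\sum_{\substack{k_1>\cdots>k_m\ge1\\ k_1>n}}T
 +\sum_{n\ge k_1>\cdots>k_m\ge1}\Bigl(1-\sqfr{n}{k_1}\Bigr)T.
\end{equation*}
Each triple in $\Pi(\bfs)$ is admissible by condition~\eqref{equ:conditionWwgl} (as remarked after Theorem~\ref{thm:general}), so $\frakz[\bfp;\wbfp;\wwbfp]$ converges absolutely and the first (tail) sum tends to $0$ as $n\to\infty$.

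For the second (defect) sum I would perform the inner summation over $k_2>\cdots>k_m$ with $k_1$ held fixed. The decisive feature is the outermost weight: condition~\eqref{equ:conditionWwgl} with $j=1$ forces the leading resolved mollifier to satisfy $\wwdp_1\in\{1,2\}$, so $Q(\wwdp_1,k_1)=\wwdp_1k_1(k_1-1)/2$ with $\wwdp_1>0$, and after extracting the genuine Gaussian factor $q^{ck_1^2}$ with $c=\wwdp_1/2>0$ the defect sum takes the precise shape $\sum_{k_1=1}^{n}q^{ck_1^2}\bigl(1-\sqfr{n}{k_1}\bigr)R_{k_1}$, where $R_{k_1}$ is the residual inner sum. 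Lemma~\ref{lem:last} then makes this vanish, provided $|R_{k_1}|<k_1^{c_1}q^{c_2k_1}$ for suitable constants. The main obstacle is exactly this uniform sub-Gaussian bound on $R_{k_1}$: one must rule out that some inner $\wwdp_j$ ($j\ge2$) is negative enough that $q^{Q(\wwdp_j,k_j)}$ grows and, with $k_j$ close to $k_1$, overwhelms the geometric factors $q^{\wdp_jk_j}$. This is precisely what the partial-sum control in~\eqref{equ:conditionWwgl} supplies: bounding every $\wwdl_1\boxplus\cdots\boxplus\wwdl_j$ inside $\{1,2\}$ keeps the cumulative Gaussian exponent across nested indices nonnegative, so the net Gaussian decay resides entirely in $q^{ck_1^2}$ and $R_{k_1}$ retains only polynomial-times-geometric size. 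Granting this, $\calH_n[\bfp;\wbfp;\wwbfp]\to\frakz[\bfp;\wbfp;\wwbfp]$ for every triple, and passing to the limit in Theorem~\ref{thm:MHS2c212&212c212} gives $\zeta^{\star}[\bfs]=-\sum_{(\bfp;\wbfp;\wwbfp)\in\Pi(\bfs)}\frakz[\bfp;\wbfp;\wwbfp]$, as claimed.
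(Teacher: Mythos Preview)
Your approach is correct and is exactly the paper's: pass to the limit $n\to\infty$ in Theorem~\ref{thm:MHS2c212&212c212} using Lemma~\ref{lem:last}, just as in the passage from Theorem~\ref{thm:MHS2c2} to Corollary~\ref{thm:MZSV2c2}. One small technical correction: extract $c=\tfrac12$ rather than $c=\wwdp_1/2$, since when $\wwdp_1=2$ an inner block with $\wwdp_j=-1$ (which does occur, e.g.\ in the $\bt\hyf1\hyf\oll{\bt\hyf c\hyf\bt\hyf1}\hyf\bt$ case with the first two $\circ$'s resolved as commas) leaves $R_{k_1}$ carrying a residual factor of order $q^{-k_1^2/2}$; the Abel-summation control you correctly invoke from~\eqref{equ:conditionWwgl} in fact yields the uniform lower bound $\sum_j\pi(\wwdp_j)\tfrac{k_j(k_j-1)}{2}\ge\tfrac{k_1(k_1-1)}{2}$, and that is precisely what the choice $c=\tfrac12$ requires.
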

For example, taking $\ell=1$ and $c_1=3$, we get in case $(\oll{\bt\hyf c\hyf\bt\hyf1}\hyf\bt)$
\begin{equation*}
\begin{split}
\zeta^{\star}[\{2\}^{b},3,\{2\}^{a_1},1,\{2\}^{a_2}]=&-\frakz[\ol{2a_1+2b+2a_2+4}; a_1+b+a_2+2; 1] \\
&-\frakz[\ol{2b+2}, 2a_1+2a_2+2; b+1, a_1+a_2+1; 1,\theta] \\
&-\frakz[2a_1+2b+4, \overline{2a_2}; a_1+b+2, a_2; 2, -1] \\
&-\frakz[\ol{2b+2}, \overline{2a_1+2}, \overline{2a_2}; b+1, a_1+1, a_2; 1, 1, -1].
\end{split}
\end{equation*}
By taking $q\to 1$ this yields the identity on the bottom of \cite[p.~12]{Zhao2013a}.

As a non-trivial example of Theorem \ref{thm:general-qMZSV} we may attach a string of
type $(2^a,1)$ to the front of the already treated type $(\{2\}^{b},1,\{2\}^{c},3,\{2\}^{d},1)$
given by \eqref{equ:212321} and get the following $q$-MZSV identity:
for any nonnegative integers $a,b,c,d$
\begin{equation}\label{equ:q-keyExample}
\begin{split}
\zeta^{\star}[\{2\}^{a},1,&\{2\}^{b},1,\{2\}^{c},3,\{2\}^{d},1]=
      \frakz[2a+2b+2c+2d+6;a+b+c+d+4;2]\\
+&\frakz[2a+1,2b+2c+2d+5;a+1,b+c+d+3;2,0]\\
      +&\frakz[2a+2b+2,2c+2d+4;a+b+2,c+d+2;2,0]\\
+&\frakz[\ol{2a+2b+2c+4},\ol{2d+2};
                     a+b+c+3,d+1;
                     1,1]\\
      +&\frakz[2a+1,2b+1,2c+2d+4;a+1,b+1,c+d+2;2,0,0]\\
+&\frakz[2a+1,\ol{2b+2c+3},\ol{2d+2};
                     a+1,b+c+2,d+1;
                     2,-1,1]\\
+&\frakz[2a+2b+2,\ol{2c+2},\ol{2d+2};
                     a+b+2,c+1,d+1;
                     2,-1,1]\\
+&\frakz[2a+1,2b+1,\ol{2c+2},\ol{2d+2};
                     a+1,b+1,c+1,d+1;
                     2,0,-1,1].
\end{split}
\end{equation}

By taking $q\to 1$ in \eqref{equ:q-keyExample}
we discover the classical MZSV identity \eqref{equ:keyExample}
in the introduction, which has not been
proved before.

\end{document}